\newcommand{\R}{{I\!\!R}}
\newcommand{\pa}{\partial}
\newcommand{\be}{\begin{equation}}
\newcommand{\ee}{\end{equation}}
\newcommand{\ba}{\begin{eqnarray}}
\newcommand{\ea}{\end{eqnarray}}
\newcommand{\nn}{\nonumber}
\newcommand{\la}{\label} 
\newcommand{\ep}{h}
\newcommand{\T}{{\cal T}} 
\newcommand{\U}{{\cal U}} 
\newcommand{\dt}{h} 
\newcommand{\e}{{\rm e}} 
\newcommand{\bp}{{\bf p}}
\newcommand{\bq}{{\bf q}}
\newcommand{\bac}{{\bf a}}
\newcommand{\ct}{{\cal T}}
\newcommand{\cu}{{\cal U}}
\newcommand{\bv}{{\bf v}}
\def\R{{\rm I}\! {\rm R}}
\newtheorem{assumption}{Assumption} 
\begin{document}

\pagestyle{headings}

\title{Multi-product operator splitting as a general method of solving
autonomous and non-autonomous equations}
\author{Siu A. Chin$^1$ \and J\"urgen Geiser$^2$}
\institute{Department of Physics, Texas A\&M University, 
College Station, TX 77843, U.S.A. \\
              Tel.: +1-979-845-4190\\
              Fax: +1-979-845-2590\\
              \email{chin@physics.tamu.edu}           
           \and
           Department of Mathematics, Humboldt-Universit\"at zu Berlin,
Unter den Linden 6,
D-10099 Berlin, Germany  \\
              Tel.: +49-30-2093-5451\\
              Fax: +49-30-2093-5859\\
              \email{geiser@mathematik.hu-berlin.de} 
}

\maketitle

\begin{abstract}
Prior to the recent development of symplectic integrators,
the time-stepping operator $\e^{h(A+B)}$ was routinely decomposed 
into a sum of products of $\e^{h A}$ and $\e^{hB}$ in the
study of hyperbolic partial differential equations. In the 
context of solving Hamiltonian dynamics,
we show that such a decomposition give rises to both {\it even}
and {\it odd} order Runge-Kutta and Nystr\"om 
integrators. By use of Suzuki's forward-time derivative
operator to enforce the time-ordered exponential, 
we show that the same decomposition can be used to solve
non-autonomous equations. In particular, odd order algorithms
are derived on the basis of a highly non-trivial {\it time-asymmetric} kernel.
Such an operator approach
provides a general and unified basis for understanding
structure non-preserving algorithms and is especially useful
in deriving very high-order algorithms via {\it analytical} extrapolations. 
In this work, algorithms up to the 100th order are tested by 
integrating the ground state wave function of the hydrogen atom.
For such a singular Coulomb problem, the multi-product expansion 
showed uniform convergence and is free of poles
usually associated with structure-preserving methods. 
Other examples are also discussed. 

\end{abstract}

{\bf Keyword} General exponential splitting, non-autonomous equations,
Runge-Kutta-Nystr\"om integrators,
operator extrapolation methods. \\
{\bf PACS} 31.15.Fx, 31.15.Qg, 31.70.Hq. \\
{\bf AMS subject classifications.} 65M15, 65L05, 65M71.

\section{Introduction}

 In the course of devising numerical algorithms for solving
 the prototype linear hyperbolic equation

\be
\partial_t u = A u_x+B u_y, \qquad u(0) = u_0 ,
\la{equ1}
\ee
where $A$ and $B$ are non-commuting matrices,
Strang\cite{strang68} proposed two second-order algorithms corresponding 
to approximating
\be
{\cal T}(\ep)=\e^{\ep(A+B)}
\la{expab}
\ee
either as
\be
S(\ep)=\frac12\left( \e^{\ep A}\e^{\ep B}+\e^{\ep B}\e^{\ep A}\right)
\la{str1}
\ee
or as
\be
S_{AB}(\ep)=\e^{(\ep/2) B}\e^{\ep A} \e^{(\ep/2) B}.
\la{str2}
\ee
Following up on Strang's work, Burstein and Mirin\cite{bur70} suggested
that Strang's approximations can be generalized to higher orders
in the form of a multi-product expansion (MPE),
\be
{\rm e}^{\ep( A+ B)}=\sum_k c_k
\prod_{i}
{\rm e}^{a_{ki}\ep A}{\rm e}^{b_{ki}\ep B}
\label{mprod} 
\ee
and gave two third-order approximations
\be
D(\ep)=\frac43 \left(\frac{S_{AB}(\ep)+S_{BA}(\ep)}{2}\right)-\frac13 S(\ep)
\la{dun}
\ee
and
\be
B_{AB}(\ep)=\frac98 \e^{(\ep/3) A}\e^{(2\ep/3) B}\e^{(2\ep/3) A}\e^{(\ep/3) B}
-\frac18 \e^{\ep A}\e^{\ep B}.
\la{bmsch}
\ee 
They credited J. Dunn for finding the decomposition $D(\ep)$ and
noted that the weights ${c_k}$ are no longer positive beyond 
second order. Thus the  stability of the entire algorithm can no longer be 
inferred from the stability of each component product.

Since (\ref{str1}), (\ref{str2}), (\ref{dun}) and (\ref{bmsch}) are
approximations for the exponential of two general operators, 
they can be applied to problems unrelated to solving 
hyperbolic partial differential equations. For example,
the evolution of any dynamical variable $u(\bq,\bp)$ (including $\bq$ and $\bp$ themselves)
is given by the
Poisson bracket,
\begin{equation}
\pa_tu(\bq,\bp)=
                 \Bigl(
		          {{\partial u}\over{\partial \bq}}\cdot
                  {{\partial H}\over{\partial \bp}}
				 -{{\partial u}\over{\partial \bp}}\cdot
                  {{\partial H}\over{\partial \bq}}
				                    \Bigr)=(A+B)u(\bq,\bp).
\label{peq}
\end{equation}
For a separable Hamiltonian, 
\begin{equation}
H(\bp,\bq)={\bp^2\over{2m}}+V(\bq),
\label{ham}
\end{equation}
$A$ and $B$ are Lie operators, or vector fields
\be
A=\bv\cdot\frac{\pa}{\pa\bq} \qquad B=\bac(\bq)\cdot\frac{\pa}{\pa\bv}
\la{shop} 
\ee
where we have abbreviated $\bv=\bp/m$ and $\bac(\bq)=-\nabla V(\bq)/m$.
The exponential operators $\e^{h A}$ and $\e^{h B}$ are then just shift operators,
with $S(h)$ giving the second-order Runge-Kutta integrator
\ba
\bq&=&\bq_0+h\bv_0+\frac12 h^2\bac(\bq_0)\equiv\bq_1\\
\bv&=&\bv_0+\frac{h}2\Bigl[\bac(\bq_0)+\bac(\bq_0+h\bv_0)\Bigr]
\ea
and $S_{AB}(h)$, the symplectic Verlet or leap-frog algorithm
\ba
\bq&=&\bq_1\la{lfq}\\
\bv&=&\bv_0+\frac{h}2\Bigl[\bac(\bq_0)+\bac(\bq)\Bigr].
\la{lfv}
\ea
More interestingly, Dunn's decomposition $D(h)$ gives
\ba
\bq&=&\bq_0+h\bv_0+\frac{h^2}6 \Bigl[\bac(\bq_0)+2\bac(\bq_0+\frac{h}2\bv_0)\Bigr]
\la{kutta3q}\\
\bv&=&\bv_0+\frac{h}6\Bigl[\bac(\bq_0)+4\bac(\bq_0+\frac{h}2\bv_0)
+2\bac(\bq_1)-\bac\Bigl(\bq_1-\frac12 h^2\bac(\bq_0)\Bigr)
\Bigr].
\la{kutta3}
\ea
Since
\be
2\bac(\bq_1)-\bac\Bigl(\bq_1-\frac12 h^2\bac(\bq_0)\Bigr)
=\bac\Bigl(\bq_1+\frac12 h^2\bac(\bq_0)\Bigr)+O(h^4),
\la{sub1}
\ee
it remains correct to third order to write
\be
\bv=\bv_0+\frac{h}6\Bigl[\bac(\bq_0)+4\bac(\bq_0+\frac{h}2\bv_0)
+\bac\Bigl(\bq_0+h\bv_0+h^2\bac(\bq_0)\Bigr)
\Bigr].
\la{kutta3v}
\ee
One recognizes that (\ref{kutta3q}) and (\ref{kutta3v}) as 
{\it precisely} Kutta's third 
order algorithm\cite{hil74} for solving a second-order differential equation.
Burstein and Mirin's approximation $B_{AB}(h)$ directly gives, without
any change, 
\ba
\bq&=&\bq_0+h\bv_0+\frac{h^2}4 \Bigl[\bac(\bq_0)
+\bac(\bq_{2/3})\Bigr]
\la{heun3q}\\
\bv&=&\bv_0+\frac{h}4\Bigl[\bac(\bq_0)+3\bac(\bq_{2/3})\Bigr],
\la{ab3}
\ea
with 
\be
\bq_{2/3}\equiv\bq_0+\frac23h\bv_0+\frac29 h^2\bac(\bq_0),
\la{q23}
\ee
which is Nystr\"om's third order algorithm requiring only two force-evaluations\cite{bat99,nys25}.
Since Burstein and Mirin's approximation is not symmetric,
$B_{BA}(h)$ produces a different algorithm
\ba
\bq&=&\bq_0+h\bv_0+\frac{h^2}2\bac_{1/3}
\la{ba3q}\\
\bv&=&\bv_0+\frac{h}4\Bigl[ 3\bac_{1/3}
+\frac32\bac(\bq_0+h\bv_0+\frac49 h^2\bac_{1/3})-\frac12\bac(\bq_0+h\bv_0)
\Bigr],
\la{ba3}
\ea
where $\bac_{1/3}=\bac(\bq_0+h\bv_0/3)$. Again, since
\be
\frac32\bac(\bq_0+h\bv_0+\frac49 h^2\bac_{1/3})-\frac12\bac(\bq_0+h\bv_0)
=\bac(\bq_0+h\bv_0+\frac23 h^2\bac_{1/3}) +O(h^4),
\la{sub2}
\ee
(\ref{ba3}) can be rewritten as
\be
\bv=\bv_0+\frac{h}4\Bigl[ 3\bac_{1/3}+\bac(\bq_0+h\bv_0+\frac23 h^2\bac_{1/3})
\Bigr].
\la{ba3f}
\ee
Eqs.(\ref{ba3q}) and (\ref{ba3f}) is a new third order
algorithm with two force-evaluations but without evaluating 
the force at the starting position.  
More recently, Ref.\cite{chin08} has shown that Nystr\"om's four-order
algorithm\cite{bat99} with three force-evaluations and Albrecht's six-order 
algorithm\cite{alb55} with five-force evaluations can all be derived from 
operator expansions of the form (\ref{mprod}). 

Just as symplectic integrators\cite{neri87,fr90} can be derived from a 
{\it single product} splitting,
\be
{\rm e}^{\ep( A+ B)}=
\prod_{i}
{\rm e}^{a_{i}\ep A}{\rm e}^{b_{i}\ep B},
\label{sprod} 
\ee
these examples clearly show that the 
{\it multi-product} splitting (\ref{mprod}) is the fundamental 
basis for deriving non-symplectic, Nystr\"om type algorithms.
(These are not fully Runge-Kutta algorithms, because the operator $B$ in (\ref{shop})
would not be a simple shift operator if $\bac(\bq)$ becomes dependent on $\bv$.
On the other hand, Nystr\"om type algorithms are all that are necessary for the study of 
most Hamiltonian systems.) As illustrated above, one goal of this work is to show that 
all traditional results on Nystr\"om integrators can be much more simply 
derived and understood on the basis of multi-product splitting. In fact, we have the
following theorem 
\begin{theorem}
Every decomposition of ${\rm e}^{\ep( A+ B)}$ in the form of
\be
\sum_k c_k
\prod_{i}
{\rm e}^{a_{ki}\ep A}{\rm e}^{b_{ki}\ep B}={\rm e}^{\ep( A+ B)}+O(\ep^{n+1}),
\label{mprodth} 
\ee
where $A$ and $B$ are non-commmuting operators, with real coefficients 
$\{c_k, a_{ki}, b_{ki}\}$ and finite indices $k$ and $i$, 
produces a nth-order Nystr\"om integrator.
\end{theorem}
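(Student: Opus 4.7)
The plan is to apply the operator identity (\ref{mprodth}) to the canonical coordinates $\bq$ and $\bv$ and show that, for the specific Lie operators $A$ and $B$ of (\ref{shop}), each single product $P_k(\ep)=\prod_i \e^{a_{ki}\ep A}\e^{b_{ki}\ep B}$ evaluates to an explicit update of Nystr\"om form, and that the weighted sum $\sum_k c_k P_k(\ep)$ then assembles into an $n$th-order Nystr\"om integrator.

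First I would establish the elementary shift-operator action: $\e^{a\ep A}$ sends $\bq\mapsto\bq+a\ep\bv$ while fixing $\bv$, and $\e^{b\ep B}$ sends $\bv\mapsto\bv+b\ep\bac(\bq)$ while fixing $\bq$. Iterating these two shifts along a single product builds, one factor at a time, a finite sequence of explicitly computable stage positions $\bq_{kj}$, each an affine expression in $\bq_0$, $\ep\bv_0$ and $\ep^2$ times previously computed force values; the final result has the schematic form
\ba
P_k(\ep)\bq&=&\bq_0+\alpha_k\ep\bv_0+\ep^2\sum_j \beta_{kj}\bac(\bq_{kj}),\nn\\
P_k(\ep)\bv&=&\bv_0+\ep\sum_j \gamma_{kj}\bac(\bq_{kj}).\nn
\ea
This is precisely the template of a Nystr\"om method for $\ddot\bq=\bac(\bq)$, and the worked examples (\ref{kutta3q})--(\ref{kutta3v}), (\ref{heun3q})--(\ref{ab3}) and (\ref{ba3q})--(\ref{ba3f}) already exhibit this structure for three-stage decompositions.

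Next I would linearly combine with weights $c_k$. Summation preserves the Nystr\"om template: the coefficients in front of the stage forces become $c_k\beta_{kj}$ and $c_k\gamma_{kj}$, and setting $\ep=0$ in (\ref{mprodth}) forces $\sum_k c_k=1$, so the constant and linear-in-$\ep$ pieces reproduce $\bq_0+\ep\bv_0$ and $\bv_0$ exactly. Thus $\sum_k c_k P_k(\ep)$ is a legitimate one-step integrator of Nystr\"om type. To pin down its order, I would apply the hypothesis to the observables $u=\bq$ and $u=\bv$: since $\sum_k c_k P_k(\ep)u=\e^{\ep(A+B)}u+O(\ep^{n+1})$ and $\e^{\ep(A+B)}$ produces the exact Hamiltonian flow of $\bq$ and $\bv$ over time $\ep$, the one-step error of the resulting scheme is $O(\ep^{n+1})$, which is by definition order $n$.

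The main obstacle I anticipate is the bookkeeping in the first step: to make the Nystr\"om template assertion airtight one has to verify that iterated shift operators never produce anything more exotic than affine combinations of $\bq_0$, $\ep\bv_0$ and earlier $\ep^2\bac(\bq_{kj})$ contributions, with no derivatives of $\bac$ and no implicit dependencies. This ultimately follows from the observation that $\e^{a\ep A}$ leaves $\bv$ invariant and $\e^{b\ep B}$ leaves $\bq$ invariant, so each successive factor modifies exactly one of $\bq,\bv$ using only a single new force evaluation; no Taylor expansion of $\bac$ is ever required inside a single product, only in the $O(\ep^{n+1})$ remainder that separates $\sum_k c_k P_k(\ep)$ from the true flow.
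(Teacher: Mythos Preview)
Your proposal is correct and, in fact, more systematic than what the paper does. The paper states Theorem~1 without a formal proof: it is placed immediately after the worked examples (\ref{kutta3q})--(\ref{ba3f}) and is justified only by the remark that ``these examples clearly show that the multi-product splitting (\ref{mprod}) is the fundamental basis for deriving non-symplectic, Nystr\"om type algorithms.'' There is no separate proof environment; the theorem is asserted on the strength of the preceding illustrations and the parenthetical remark about the order convention that follows it. Your argument---evaluating each factor as a shift, assembling a single product into the Nystr\"om template, then taking the $c_k$-weighted sum and reading off the local error from the operator hypothesis---is exactly the mechanism the paper's examples display, but you have made the inductive structure explicit.

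One small tightening: you write that ``setting $\ep=0$ in (\ref{mprodth}) forces $\sum_k c_k=1$, so the constant and linear-in-$\ep$ pieces reproduce $\bq_0+\ep\bv_0$ and $\bv_0$ exactly.'' The constant pieces, yes; but the linear-in-$\ep$ piece of the $\bq$-update is $\ep\bv_0\sum_k c_k\alpha_k$ with $\alpha_k=\sum_i a_{ki}$, and $\sum_k c_k\alpha_k=1$ does not follow from $\ep=0$ alone. It follows instead from matching the $O(\ep)$ term of (\ref{mprodth}), i.e.\ from first-order consistency, which is already contained in your hypothesis $n\ge 1$. This is a cosmetic fix, not a gap.
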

(Note that the order $n$ of the integrator is defined with respect to
the error in approximating the operator $(A+B)$ and therefore the error in the
time-stepping operator is one order higher.)
The resulting integrator, however, may not be optimal. As
illustrated above, at low orders, some force evaluations can be
combined without affecting the order of the integrator. However, such a
force consolidation is increasely unlikely at higher orders. 
This theorem produces, both the traditional Nystr\"om integrators where the force 
is always evaluated initially, and non-FASL(First as Last) 
integrators where the force is never evaluated initially, as in
(\ref{ba3q}) and (\ref{ba3f}). 

The advantage of a single product splitting is that the resulting 
algorithms are structure-preserving, such as being  
symplectic, unitary, or remain within the group manifold. However, 
single product splittings beyond the second-order requires exponentially 
growing number of operators with unavoidable negative coefficients\cite{sheng89,suzuki91} 
and cannot be applied to time-irreversible or semi-group problems. 
Even for time-reversible systems where negative time steps are not a problem, 
the exponential growth on the number of force evaluations renders high order symplectic 
integrators difficult to derived and expensive to use. For example, it has been found 
empirically that symplectic algorithms of orders 4, 6, 8 and 10, required a minimum of 
3, 7, 15 and 31 force-evaluations respectively\cite{chin08}. Here, we show that
{\it analytically} extrapolated
algorithms of {\it odd} orders 3, 5, 7, 9
only requires 2, 4, 7, 11 force-evaluations and algorithms of
{\it even} orders 4, 6, 8, 10, only
require 3, 5, 10, 15 force evaluations. Thus at the tenth order,
an extrapolated MPE integrator, only requires half the
computational effort of a symplectic integrator. Or, for 28 force-evaluations,
one can use a 14th order MPE integrator instead.
This is a great advantage in many practical calculations where long term accuracy 
and structure preserving is not an issue.
The advantage is greater still beyond the tenth order, where no symplectic
integrators and very few RKN algorithms are known. 
Here, we demonstrated the working of MPE algorithms up to the 100th order.

By use of Suzuki\cite{suzu93} method of implementing the
time-ordered exponential, this work shows that 
the multi-product expansion (\ref{mprodth}) can
be easily adopted to solve the non-autonomous equation
\be
\partial_t Y(t) = A(t)Y(t), \quad Y(0) = Y_0.
\la{funeq}
\ee
In even-order cases, this method reproduces Gragg's\cite{gragg65}
classical result in just a few lines. In odd-order cases, this
method demonstrates a highly non-trivial extrapolation of 
a {\it time-asymmetric} kernel, which has never been anticipated before.
Finally, we show that the multi-product expansion (\ref{mprodth}) 
converges uniformly, in contrast to structure-preserving methods, 
such as the Magnus expansion, 
which generally has a finite radius of convergence. The convergence
of (\ref{mprodth}) is verified in various analytical and numerical examples, 
up to the 100th order.

The paper is outlined as follows:
In Section \ref{mpedec}, we derive key results of MPE, including the
extrapolation of odd-order algorithms. 
In Section \ref{suzuki}, we show how Suzuki's method can be used
to transcribe any splitting scheme for solving non-autonomous equations.
In Section \ref{error}, we present an error and convergence analysis of
the multi-product splitting based on extrapolation.
Numerical examples and comparison to the Magnus expansion are given in 
Section \ref{exp}.
In Section \ref{conc}, we briefly summarize our results. 

\section{Multi-product decomposition}
\label{mpedec}

The multi-product decomposition (\ref{mprod}) is obviously more complicated 
than the single product splitting (\ref{sprod}). Fortunately, 
nineteen years after Burstein and Mirin, Sheng\cite{sheng89} proved their
observation that beyond second-order, $a_{ki}$, $b_{ki}$ and $c_k$ cannot all be
positive. This negative result, surprisingly, can be used to completely determine 
$a_{ki}$, $b_{ki}$ and $c_k$ to all orders. This is because   
for general applications, including solving time-irreversible problems, 
one must have $a_{ki}$ and $b_{ki}$ positive.
Therefore every single product in (\ref{mprod}) can at most be 
second-order\cite{sheng89,suzuki91}. But such a product is
easy to construct, because every left-right symmetric single product {\it is} 
second-order. Let ${\cal T}_S(h)$ be such a product with
$\sum_i a_{ki}=1$ and $\sum_i b_{ki}=1$, then
${\cal T}_S(h)$ is time-symmetric by construction, 
\be
{\cal T}_S(-h){\cal T}_S(h)=1,
\ee
implying that it has only odd powers of $h$
\be
{\cal T}_S(\ep)=\exp(\ep (A+B)+\ep^3 E_3+\ep^5 E_5+\cdots)
\la{secerr}
\ee
and therefore correct to second-order.
(The error terms $E_i$ are nested commutators of $A$ and $B$
depending on the specific form of $\ct_S$.)
This immediately suggests that the $k$th power of ${\cal T}_S$ at 
step size $h/k$ must have the form 
\be
{\cal T}_S^k(\ep/k) 
=\exp(\ep (A+B)+k^{-2}\ep^3 E_3+k^{-4}\ep^5 E_5+\cdots),
\la{thk}
\ee
and can serve as a basis for the multi-production expansion (\ref{mprod}).
The simplest such symmetric product is  
\be
{\cal T}_2(h)=S_{AB}(h) \quad{\rm or}\quad {\cal T}_2(h)=S_{BA}(h).
\ee
If one naively assumes that
\be
{\cal T}_2(h)=\e^{\ep(A+B)}+ Ch^3+Dh^4+\cdots,
\ee
then a Richardson extrapolation would only give
\be
\frac1{k^2-1}\left[k^2 \ct_2^k(h/k)-\ct_2(h)\right]=\e^{\ep(A+B)}+ O(h^4),
\ee
a third-order\cite{schat94} algorithm. However, because the error structure
of $\ct_2(h/k)$ is actually given by (\ref{thk}), one has
\be
{\cal T}_2^k(h/k)=\e^{\ep(A+B)}+ k^{-2}h^3E_3+\frac12 k^{-2}h^4[(A+B)E_3+E_3(A+B)]+O(h^5),
\ee
and {\it both} the third and fourth order errors can be eliminated simultaneously, 
yielding a fourth-order algorithm. Similarly, the leading $2n+1$ and $2n+2$ order 
errors are multiplied by $k^{-2n}$ and can be eliminated at the same time. Thus 
for a given set of $n$ whole numbers $\{k_i\}$ one can have a $2n$th-order approximation  
\be
{\rm e}^{\ep(A+B)}
=\sum_{i=1}^n c_i{\cal T}_2^{k_i}\left(\frac\ep{k_i}\right)
+O(h^{2n+1}).
\la{mulexp}
\ee
provided that $c_i$ satisfy the simple Vandermonde equation: 
\be
\left(
\begin{array}{c c c c c}
1 & 1 & 1 & \ldots & 1 \\
k_1^{-2} &  k_2^{-2} & k_3^{-2} & \ldots & k_n^{-2} \\
k_1^{-4} &  k_2^{-4} & k_3^{-4} & \ldots & k_n^{-4} \\
\ldots & \ldots & \ldots & \ldots & \ldots \\
k_1^{-2 (n - 1)} &  k_2^{-2 (n - 1)} & k_3^{-2 (n - 1)} & \ldots & k_n^{-2 (n - 1)}
\end{array}
\right)
\left(
\begin{array}{c}
c_1 \\
c_2 \\
c_3 \\
\ldots \\
c_n
\end{array}
\right)
=
\left(
\begin{array}{c}
1 \\
0 \\
0 \\
\ldots \\
0
\end{array}
\right)
\la{vand}
\ee
Surprisely, this equation has closed form solutions\cite{chin08} for all $n$
\be
c_i=\prod_{j=1 (\ne i)}^n\frac{k_i^2}{k_i^2-k_j^2}.
\la{coef}
\ee
The natural sequence $\{k_i\}=\{1, 2, 3 \,...\, n\}$ produces a $2n$th-order
algorithm with the minimum $n(n+1)/2$ evaluations of $\ct_2(h)$.
For orders four to ten, one has explicitly:
\be
{\cal T}_4(\ep)=-\frac13{\cal T}_2(\ep)
+\frac43{\cal T}_2^2\left(\frac\ep{2}\right)
\la{four}
\ee
\be
{\cal T}_6(\ep)=\frac1{24} {\cal T}_2(\ep)
-\frac{16}{15}{\cal T}_2^2\left(\frac\ep{2}\right)
+\frac{81}{40}{\cal T}_2^3\left(\frac\ep{3}\right)
\la{six}
\ee
\be
{\cal T}_8(\ep)=-\frac1{360} {\cal T}_2(\ep)
+\frac{16}{45}{\cal T}_2^2\left(\frac\ep{2}\right)
-\frac{729}{280}{\cal T}_2^3\left(\frac\ep{3}\right)
+\frac{1024}{315}{\cal T}_2^4\left(\frac\ep{4}\right)
\la{eight}
\ee
\ba
&&{\cal T}_{10}(\ep)=\frac1{8640} {\cal T}_2(\ep)
-\frac{64}{945}{\cal T}_2^2\left(\frac\ep{2}\right)
+\frac{6561}{4480}{\cal T}_2^3\left(\frac\ep{3}\right)\nn\\
&&\qquad\qquad\quad-\frac{16384}{2835}{\cal T}_2^4\left(\frac\ep{4}\right)
+\frac{390625}{72576}{\cal T}_2^5\left(\frac\ep{5}\right).
\la{ten}.
\ea
As shown in Ref.\cite{chin08}, $\ct_4(h)$ reproduces Nystr\"om's fourth-order 
algorithm with three force-evaluations
and $\ct_6(h)$ yielded a new
sixth-order Nystr\"om type algorithm with five force-evaluations.

\begin{remark}
It is easy to show that the Verlet algorithm (\ref{lfq}) and (\ref{lfv}) corresponding
to $S_{AB}(h)$ produces the same trajectory as St\"omer's second order scheme
\be
\bq_1-2\bq_0+\bq_{-1}=h^2\bac(\bq_0).
\la{strom}
\ee
However, it is extremely difficult to deduce from (\ref{strom}) that the underlying
error structure is basically (\ref{secerr}) and allows for a $h^2$-extrapolation.
This is the great achievment of Gragg\cite{gragg65}. Nevertheless, the power of the present 
operator approach is that we can reproduce his results in a few lines. 
The error structure here, (\ref{secerr}), is a simple consequence of the symmetric
character of the product, and allows us to bypass Gragg's lengthy proof on the asymptotic 
errors of (\ref{strom}). Moreover, this $h^2$-extrapolation can be applied to any $\ct_S(h)$,
not necessarily restricted to (\ref{strom}). For example, the use of $S_{BA}(h)$
produces an entirely different sequence of extrapolations\cite{chin08}, distinct from 
from that based on (\ref{strom}).  
\end{remark}

\begin{remark}
In the original work of Gragg, the use of (\ref{strom}) as the basis for his
extrapolation is a matter of default; it is a well-known second-order solution.
Here, in extrapolating operators, the use of $S_{AB}(h)$ or $S_{BA}(h)$ is for the 
specific purpose that they can be applied to time-irreversible problems. 
While all positive time steps algorithms are possible in the fourth-order\cite{suzu96,chin97}
by including the operator $[B,[A,B]]$, MPE is currently the only way of producing 
sixth and higher-order algorithms in solving the imaginary time Schr\"odinger 
equation\cite{chin093} and in doing Path-Integral Monte Carlo simulations\cite{zill10}.  
The fact that MPE is no longer norm preserving nor even strictly positive, 
does not affect the higher order convergences in these applications. These
non-structure preserving elements are within the error noise of the algorithm.
MPE is less useful in solving the real time Schr\"oding equation where, unitarity 
is of critical importance.
 
\end{remark}

\begin{remark} The explicit coefficient $c_i$ coincide with the diagonal elements of the 
Richardson-Aitken-Neville extrapolation\cite{hair93} table. This is not surprising, since
they are coefficients of extrapolation. As shown in \cite{chin08}, $c_i=L_i(0)$, where
$L_i(x)$ are the Lagrange interpolating polynomials with interpolation points $x_i=k^{-2}_i$. 
What is novel here is that $c_i$ is known analytically 
and a simple routine calling it repeatedly to execute $\ct_2(h)$ will generate an
arbitrary even order algorithm without any table construction. The resulting algorithm
is extremely portable and compact and can serve as a benchmark by which all 
integrators of the same order can be compared. In Ref.\cite{chin08},  
the only algorithm that have outperformed MPE is Dormand and Prince's\cite{dorm87} 
12th-order integrator as given in Ref.\cite{brank89}.
\end{remark}

Having the explicit solutions $c_i$ now suggests new ways of solving old problems.
Suppose one wishes to integrate the system to time $t$. One may begin by using 
a second-order algorthm and iterate it $m$ time at time step $h=t/m$,
\be
\T_{2,m}(h)=\T_2^m(t/m).
\la{t2n}
\ee
Every position on the trajectory will then be correct to second order in $h$.
However, if one were only interested in the final position at time $t$, then 
one can correct this {\it final position} to fourth order by simply computing
{\it one} more $\T_2(t)$ and modify (\ref{t2n}) via
\be
\T_{4,m}(h)=\frac{m^2}{m^2-1^2}\T_2^m(t/m)-\frac{1^2}{m^2-1^2}\T_2(t),
\la{t4n}
\ee 
or correct it to sixth-order via
\be
\T_{6,m}(h)=\frac{m^4\T_2^m(t/m)}{(m^2-1^2)(m^2-2^2)}
+\frac{2^4\T_2^2(t/2)}{(2^2-1^2)(2^2-m^2)}
+\frac{1^4\T_2(t)}{(1^2-2^2)(1^2-m^2)},
\ee 
and so on, to any even order. The expansion coefficients are given by
$\{k_i\}$ equal to 
$\{m,1\}$, $\{m,2,1\}$, $\{m,3,2,1\}$ {\it etc.}. This is similar to the idea of 
process algorithms\cite{blan99}, but much, much simpler. The processor for
correcting (\ref{t2n}) beyond the fourth-order can be quite complex if the
entire algorithm were to remain symplectic. Here, for Nystr\"om integrators,
the extrapolation coefficient is known to all even orders. 
Alternatively, one can view the above as correcting every $m$th step of 
the basic algorithm $\T_2(t/m)$ over a short time interval of $t$. 
Thus knowing $c_i$ allows great flexibility is designing algorithms that run
the gamut from being correct to arbitrary high order at every time-step, every other
time-step, every third time-step, {\it etc.}, to only at the final time step. With
MPE, one can easily produce versatile adaptive algorithms by varying both the time step 
size $h$ and the order of the algorithm.

\begin{remark}
Since MPE is an extrapolation, it is expected to be more proned to
round-off errors. Thus if $n$ is too large in (\ref{t4n}), the second term
maybe too small and the correction is lost to round-off errors. However, as seen in 
(\ref{sub1}) and (\ref{sub2}), the required substractions are sometime 
well-defined and the the round-off errors are within the error noise of the algorithm. 
As will be shown in Section \ref{exp},
the round-off errors are sometime less severe than expected.

\end{remark}

\begin{remark} The idea of extrapolating symplectic algorithms has been considered by 
previously by Blanes, Casas and Ros\cite{bcr99} and Chan and Murua\cite{cm00}. 
They studied the case of extrapolating an $2n$-order symplectic integrator. 
They did not obtain analytical forms for their expansion coefficients but noted that 
extrapolating a $2n$-order symplectic integrator will preserve the symplectic character 
of the algorithm to order $4n+1$. While this is more general, such an extrapolation 
cannot be applied to time-irreversible systems for $n>1$. 
\end{remark}

Finally, we note that
\be
{\rm e}^{\ep( A+ B)}=\lim_{n\rightarrow\infty}\sum_{i=1}^n c_i {\cal T}_2^{k_i}
\left(\frac\ep{k_i}\right).
\la{mcomp}
\ee
In principle, for any countable sets of $\{k_i\}$, we have achieved an exact decomposition, 
with known coefficients. This is in contrast to the structure-preserving, but
impractical Zassenhaus formula.

The above derivation of even-order algorithms, is at most an elaboration on
Gragg's seminal work. Below, we will derive arbitrary {\it odd-order} Nystr\"om 
algorithms which have not been anticipated in any classical study. Since
\be
\ct_1(h)=\e^{hA}\e^{hB}=\exp[\ep (A+B)+\ep^2 F_2+\ep^3 F_3+\ep^4 F_4+\cdots],
\la{firsterr}
\ee
contain errors of all orders ($\{F_i\}$ are nested commutators of the usual 
Baker-Campbell-Hausdorff formula),
extrapolations based on $\ct_1^k(h/k)$ will not yield a $h^2$-order scheme. 
However, there is a $h^2$-order basis
hidden in Burstein and Mirin's original decomposition (\ref{bmsch}). 
The following basis for $n=1,2,3 \dots$
\be
{\cal U}_n(h)=\e^{\frac{h}{2n-1}A}
(\e^{\frac{2h}{2n-1}B}\e^{\frac{2h}{2n-1}A})^{n-1}\e^{\frac{h}{2n-1}B}
\la{ubasis}
\ee
has the remarkable property that it {\it effectively} behaves as if
\ba
\label{struc_1}
{\cal U}_n(h)=\exp[\ep (A+B)+x^{-2}(\ep^2 F_2+\ep^3 F_3)+x^{-4}(\ep^4 F_4+\ep^5 F_5)+\cdots]
\la{eff}
\ea
where $x=(2n-1)$.
(By effectively we mean that ${\cal U}_n(h)$ actually has the form
\ba
{\cal U}_n(h)&=&\exp[\ep (A+B)+x^{-2}(\ep^2 F_2+\ep^3 F_3)    \nn\\
&&\qquad +(x^{-2}-x^{-4})h^4F^\prime_4 +x^{-4}(\ep^4 F_4+\ep^5 F_5)+\cdots]
\la{act}
\ea
where $F^\prime_4$ are additional commutators not present in (\ref{firsterr}).
However, this is essentially (\ref{eff}) with altered $F_i$ but without 
changing the crucial power pattern of $x^{-2k}$.) In this case,
(\ref{eff}) (as well as (\ref{act})) can be extrapolated similarly as in the 
even order case,  
\be
{\rm e}^{\ep(A+B)}
=\sum_{i=1}^n \tilde{c}_i{\cal U}_{i} (\ep) + O(h^{2n}),
\la{mulexp_2}
\ee
where $\tilde{c}_i$ satisfies the same 
Vandermonde equation (\ref{vand}), with the same solution (\ref{coef}),
but with $\{k_i\}$ consists of only {\it odd} whole numbers. The first few odd
order decompositions corresponding to $\{k_i\}$ being
$\{1,3\}$, $\{1,3,5\}$, $\{1,3,5,7\}$ and $\{1,3,5,7,9\}$ are:
\be
\ct_3(\ep)=-\frac18\cu_1(\ep)+\frac98\cu_2(\ep)
\la{three}
\ee
\be
\ct_5(\ep)=\frac1{192}\cu_1(\ep)-\frac{81}{128}\cu_2(\ep)+\frac{625}{384}\cu_3(\ep)
\la{five}
\ee
\be
\ct_7(\ep)=-\frac1{9216}\cu_1(\ep)+\frac{729}{5120}\cu_2(\ep)
-\frac{15625}{9216}\cu_3(\ep)+\frac{117649}{46080}\cu_4(\ep)
\la{seven}
\ee
\ba
\ct_9(\ep)&=&\frac1{737280}\cu_1(\ep)-\frac{729}{40960}\cu_2(\ep)
+\frac{390625}{516096}\cu_3(\ep)\nn\\
&&\qquad  -\frac{5764801}{1474560}\cu_4(\ep)+\frac{4782969}{1146880}\cu_5(\ep)
\la{nine}.
\ea

The splitting $\ct_3(h)$ explains the original form of Burstein and Mirin's decomposition
and Nystr\"om's third-order algorithm. The splitting $\ct_5(h)$
again produces, without any tinkering, Nystr\"om's fifth-order 
integrators\cite{bat99} with four force-evaluations:   
\ba
\bq&=&\bq_0+h\bv_0+\frac{h^2}{192}
\Bigl[23 \bac_0 +75\bac_{2/5}-27\bac_{2/3}+25\bac_{4/5}\Bigr]
\la{alg5q}\\
\bv&=&\bv_0+\frac{h}{192}\Bigl[23\bac_0+125\bac_{2/5}-81\bac_{2/3}+125\bac_{4/5}  \Bigr],
\la{alg5v}
\ea
where we have denoted $\bac_{i/k}=\bac(\bq_{i/k})$ with
\ba
\bq_{2/5}&=&\bq_0+\frac25 h\bv_0+\frac2{25}h^2\bac_0 \nn\\
\bq_{4/5}&=&\bq_0+\frac45 h\bv_0+\frac4{25}h^2(\bac_0+\bac_{2/5})
\la{fiveq}
\ea
and where $\bq_{2/3}$ has been given earlier in (\ref{q23}).
(Interchange of $A\leftrightarrow B$ in $\ct_5(h)$ will also yield
a fifth-order algorithm, but since the final force-evaluations can only
be combined as in (\ref{sub2}) to order $O(h^4)$, such a force consolidation 
cannot be used for a fifth-order algorithm. The algorithm will then
require six force-evaluations, which is undesirable. We shall therefore
ignore this alternative case from now on.)
With three more force-evaluations at
\ba
\bq_{2/7}&=&\bq_0+\frac27 h\bv_0+\frac2{49}h^2\bac_0 \nn\\
\bq_{4/7}&=&\bq_0+\frac47 h\bv_0+\frac4{49}h^2(\bac_0+\bac_{2/7}) \nn\\
\bq_{6/7}&=&\bq_0+\frac67 h\bv_0+\frac2{49}h^2(3\bac_0+4\bac_{2/7}+2\bac_{4/7}),
\la{sevenq}
\ea
$\ct_7(h)$ produces the
following seventh-order algorithm with seven force-evaluations, which has never been 
derived before,
\ba
\bq&=&\bq_0+h\bv_0+\frac{h^2}{23040}
\Bigl[
1682\bac_0+729\bac_{2/3} -3125(3\bac_{2/5}+\bac_{4/5})\nn\\
&&\qquad\qquad\qquad\qquad\qquad\qquad\qquad +2401(5\bac_{2/7}+3\bac_{4/7}+\bac_{6/7})
\Bigr]
\la{alg7q}\\
\bv&=&\bv_0+\frac{h}{23040}\Bigl[
1682\bac_0+2167\bac_{2/3} -15625(\bac_{2/5}+\bac_{4/5})\nn\\
&&\qquad\qquad\qquad\qquad\qquad\qquad\qquad +16807(\bac_{2/7}+\bac_{4/7}+\bac_{6/7})
\Bigr].
\la{alg7v}
\ea
These analytical derivations are of course unnecessary in practical applications. As
in the even-order case, both the coefficients $c_k$ and the algorithm corresponding to
$\cu_{n}(h)$ can be called repeatedly to generate any odd-order integrators. 
   
Since each $\cu_n(h)$ requires $n$ force evaluation, but have the initial force
in common, each $(2n-1)$ order algorithm requires $\frac12 n(n-1)+1$
force-evaluations. Thus for odd-orders 3, 5, 7, 9, the number of force-evaluation
required are 2, 4, 7, 11. As alluded to earlier, for even-order 4, 6, 8, 10,
the number of force-evaluation required are 3, 5, 10, 15. These   
sequences of extrapolated algorithms therefore provide 
a natural explanation for the order barrier in Nystr\"om algorithms. 
For order $p<7$, the number of force-evaluation can be $p-1$, but for
$p>7$, the number of force-evaluation must be greater than $p$.

\begin{remark}
In general we have the following order notation for the even and odd algorithms:
\begin{itemize}
\item The order of the even
algorithm is $2n$, its decomposition error is $2n+1$. 
\item The order of the
odd algorithm is $2n-1$, its decomposition error is $2n$. 
\end{itemize}
\end{remark}

\section{Solving non-autonomous equations}
\label{suzuki}

The solution to the non-autonomous equation (\ref{funeq}) can
be formally written as  

\begin{equation}
Y(t+\dt)=\T\Bigl(\exp\int_t^{t+\dt}A(s)ds\Bigr)Y(t),
\label{expth}
\end{equation}
aside from the conventional expansion
\begin{equation}
\T\Bigl(\exp\int_t^{t+\dt}A(s)ds\Bigr)=1+\int_t^{t+\dt}A(s_1)ds_1
+\int_t^{t+\dt}ds_1\int_t^{s_1}ds_2A(s_1)A(s_2)+\cdots,
\label{texpexp}
\end{equation}
the time-ordered exponential can also be 
interpreted more intuitively as
\begin{eqnarray}
\T\Bigl(\exp\int_t^{t+\dt}A(s)ds\Bigr)
=&&\lim_{n\rightarrow\infty}
\T\Bigl({\rm e}^{
{{\dt}\over n}
\sum_{i=1}^{n}A(t+i{{\dt}\over n})}\Bigr),\la{tor1}\\
=&&\lim_{n\rightarrow\infty}
{\rm e}^{ {{\dt}\over n}A(t+\dt)}
\cdots
{\rm e}^{ {{\dt}\over n}A(t+{{2\dt}\over n})}
{\rm e}^{ {{\dt}\over n}A(t+{{\dt}\over n})}.
\label{tor2}
\end{eqnarray}
The time-ordering is trivially accomplished in going
from (\ref{tor1}) to (\ref{tor2}). To enforce latter,
Suzuki\cite{suzu93} introduces the {\it forward time derivative} operator, also called super-operator:
\begin{equation}
D={{\buildrel \leftarrow\over\partial}\over{\partial t}}
\label{ftsh}
\end{equation}
such that for any two time-dependent functions $F(t)$ and $G(t)$,
\begin{equation}
F(t){\rm e}^{\dt D}G(t)=F(t+\dt)G(t).
\label{fg}
\end{equation}
If $F(t) = 1$, we have
\begin{equation}
1 {\rm e}^{\dt D}G(t)={\rm e}^{\dt D}G(t) = G(t).
\label{fg}
\end{equation}
Trotter's formula then gives
\begin{eqnarray}
\exp[\dt(A(t)+D)]
=&&\lim_{n\rightarrow\infty}\Bigr( {\rm e}^{ {{\dt}\over n}A(t)}
     {\rm e}^{ {{\dt}\over n}D}\Bigr)^n,\nonumber\\
=&&\lim_{n\rightarrow\infty}
{\rm e}^{ {{\dt}\over n}A(t+\dt)}
\cdots
{\rm e}^{ {{\dt}\over n}A(t+{{2\dt}\over n})}
{\rm e}^{ {{\dt}\over n}A(t+{{\dt}\over n})},
\label{altas}
\end{eqnarray}
where property (\ref{fg}) has been applied repeatedly and accumulatively.
Comparing (\ref{tor2}) with (\ref{altas}) yields Suzuki's decomposition of
the time-ordered exponential\cite{suzu93}
\begin{equation}
\T\Bigl(\exp\int_t^{t+\dt}A(s)ds\Bigr)=\exp[\dt(A(t)+D)].
\label{tdecom}
\end{equation}
Thus time-ordering can be achieve by splitting an additional
operator $D$. This is extremely useful and transforms any existing splitting 
algorithms into integrators of non-autonomous equations. 
For example, one has the following symmetric splitting 
\be
\T_2(\ep)=\e^{\frac12 \ep D}\e^{\ep A(t)}\e^{\frac12 \ep D}=\e^{\ep A(t+\frac12 \ep)},
\la{sec}              
\ee
which is the second-order mid-point approximation.
Every occurrence of the operator $\e^{d_i\ep D}$, from right to left, updates the current 
time $t$ to $t+d_i\ep$. If $t$ is the time at the start of the
algorithm, then after the first occurrence of $\e^{\frac12 \ep D}$, time is $t+\frac12\dt$.
After the second $\e^{\frac12 \ep D}$, time is $t+\dt$. Thus the leftmost 
$\e^{\frac12 \ep D}$ is not without effect, it correctly updates the time for the next
iteration. Thus the iterations of $\T_2(\ep)$ implicitly imply
\ba
 \T_2^2(\ep/2)&=&\e^{\frac12\ep A(t+\frac34 \ep)}\e^{\frac12\ep A(t+\frac14 \ep)}\nn\\
 \T_2^3(\ep/3)&=&\e^{\frac13\ep A(t+\frac56 \ep)}\e^{\frac13\ep A(t+\frac12 \ep)}
 \e^{\frac13\ep A(t+\frac16 \ep)}\nn\\
&&\qquad\qquad\qquad\cdots
\la{tn}
\ea
For the odd-order basis,  we have
\ba
\cu_1(h)&=&\e^{\dt D }\e^{\dt A(t)}=\e^{\dt A(t)}\nn\\
\cu_2(h)&=&\e^{\frac13\dt D }\e^{\frac23\dt A(t)}\e^{\frac23\dt D }\e^{\frac13\dt A(t)}
=\e^{\frac23\dt A(t+\frac23 h)}\e^{\frac13\dt A(t)}\nn\\
\cu_3(h)&=&\e^{\frac25\dt A(t+\frac45 h)}
                             \e^{\frac25\dt A(t+\frac25 h)}\e^{\frac15\dt A(t)}\nn\\
&&\qquad\qquad\qquad\cdots
\la{tubas}
\ea

\begin{remark}
The recent work by Wiebe {\it et al.}\cite{wiebe08} suggests that Suzuki's decomposition
(\ref{tdecom}) only holds if $A(t)$ is sufficiently smooth. In cases where the
derivatives of $A(t)$ cease to exist, high-order integrators based on (\ref{tdecom})
maybe degraded to lower orders.  
\end{remark}

For $A(t)=T+V(t)$, since $[D,T]=0$, the second-order algorithm can be
obtained as
\ba
\label{error_2}
\T_2(\ep)
&=&{\rm e}^{{1\over 2}\dt (T+D) }\e^{\ep V(t)}{\rm e}^{{1\over 2}\dt (T+D) }\nn\\
&=&{\rm e}^{{1\over 2}\dt T }{\rm e}^{{1\over 2}\dt D }
\e^{\ep V(t)}{\rm e}^{{1\over 2}\dt D }{\rm e}^{{1\over 2}\dt T }\nn\\
&=&{\rm e}^{{1\over 2}\dt T }
{\rm e}^{\dt V(t+\dt/2)}
{\rm e}^{{1\over 2}\dt T }.
\ea
For odd order algorithms, we now have the following sequence of basis product
\ba
\cu_1(h)&=&\e^{\dt T }\e^{\dt V(t)}\nn\\
\cu_2(h)&=&\e^{\frac13\dt T }\e^{\frac23\dt V(t+\frac23 h)}\e^{\frac23\dt T }\e^{\frac13\dt V(t)}\nn\\
\cu_3(h)&=&\e^{\frac15\dt T }\e^{\frac25\dt V(t+\frac45 h)}\e^{\frac25\dt T }
                             \e^{\frac25\dt V(t+\frac25 h)}\e^{\frac25\dt T }\e^{\frac15\dt V(t)}\nn\\
&&\qquad\qquad\qquad\cdots
\la{tvubas}
\ea
While any power of $\T_2(\ep)$ is time-symmertic,
each $\cu_{n}(h)$ is {\it time asymmetric},
\be
\cu_{n}(-h)\cu_n(h)\ne 1.
\ee

\section{Errors and convergence of the Multi-product expansion}
\label{error}

While extrapolation methods are well-known in the study of
differential equations, there is a virtually no work done in the 
context of operators. Here, we extend the method of extrapolation
to the decomposition of two operators, which is the basis of the MPE method.
Working at the operator, rather than at the solution level, allows the
extrapolation method be widely applied to many
time-dependent equations. In particular, we will use the constructive 
details in\cite{chin08} to prove convergence results for the multi-product
expansion. While this work is restricted to exponential splitting, our
proof of convergence based on the general framework of \cite{han08}.

\subsection{Analysis of the even-order kernel $\T_2$}

We will assume that at sufficient small $h$, the Strang splitting is
bounded as follow:
\be
\label{equation_1}
 || \T_2(h) || = || \exp(\frac{1}{2} h D) \exp(h A(t)) \exp(\frac{1}{2} h D) || \le \exp(c \omega h),
\ee
with $c$ only depend on the coefficients of the method, see the work of convergence analysis 
on this splitting by Janke and Lubich \cite{jan00}.
We can then derive the following convergence results for the multi-product expansion.

\begin{theorem}
\label{conv_1}

For the numerical solution of (\ref{funeq}), we consider the
MPE algorithm (\ref{mulexp}) of order $2n$. Further we assume 
the error estimate in equation (\ref{equation_1}),
then we have the following convergence result:

\be
|| \left( S^m - \exp(m h (A(t) + D)) \right)u_0 || \le C O(h^{2n+1}) ,   m h \le t_{end} ,
\ee
where $ S = \sum_{i=1}^n c_i \T_2^{k_i}(\frac{h}{k_i})$ and $C$ is to be chosen uniformly on bounded time intervals and
independent of $m$ and $h$ for sufficient small $h$.

\end{theorem}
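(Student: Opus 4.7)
The natural route is the classical Lady Windermere's fan argument, combining a local consistency estimate with a discrete stability bound. Write $E(\ep;t) := \exp(\ep(A(t)+D))$ for Suzuki's augmented one-step propagator, which by (\ref{tdecom}) reproduces the time-ordered exponential on time-dependent data, and write $S(\ep;t) := \sum_{i=1}^n c_i \T_2^{k_i}(\ep/k_i)$ for the MPE kernel. The first task is a local error estimate
$$\| (S(\ep;t) - E(\ep;t)) u_0 \| = O(\ep^{2n+1}).$$
This follows directly from the construction of the MPE: by (\ref{secerr}) applied to the augmented pair $(A(t), D)$, the symmetric kernel $\T_2(\ep)$ carries only odd-power error terms, so by (\ref{thk})
$$\T_2^{k_i}(\ep/k_i) = \exp\Bigl(\ep(A(t)+D) + \sum_{\ell \ge 1} k_i^{-2\ell} \ep^{2\ell+1} E_{2\ell+1}\Bigr),$$
and the Vandermonde condition (\ref{vand}), whose solution is (\ref{coef}), annihilates exactly the first $n$ error coefficients, pushing the residual to order $\ep^{2n+1}$.

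The second step is a stability bound for iterates of $S$. A naive triangle inequality only yields $\|S(\ep;t)\| \le (\sum_i |c_i|)\exp(c\omega\ep)$, which is useless under iteration because $\sum_i |c_i|$ exceeds $1$ and would produce exponential growth in the step count. Instead, I bootstrap using the local error just established: $\|S(\ep;t)\| \le \|E(\ep;t)\| + \|S(\ep;t)-E(\ep;t)\| \le \exp(\omega\ep) + C \ep^{2n+1} \le \exp(\tilde\omega\,\ep)$ for $\ep$ sufficiently small, hence $\|S^j\| \le \exp(\tilde\omega j\ep) \le \exp(\tilde\omega t_{end})$ uniformly for $j\ep \le t_{end}$. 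With stability and local consistency in hand, the standard telescoping identity
$$S^m - E^m = \sum_{j=0}^{m-1} S^{j}\,(S - E)\,E^{m-1-j}$$
gives
$$\| (S^m - E^m)u_0 \| \le \sum_{j=0}^{m-1} \|S^{j}\| \cdot \| (S-E)\,E^{m-1-j} u_0 \| \le m\,C'\,\ep^{2n+1},$$
and invoking $m\ep \le t_{end}$ yields the claimed bound with a constant $C$ depending only on $t_{end}$, $\omega$, the coefficients $c_i$, and the commutator norms entering the local error.

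The main obstacle is the very first step: justifying that the autonomous error structure (\ref{secerr}), which was derived for a genuine pair of non-commuting operators, carries over verbatim once $B$ is replaced by Suzuki's forward time-derivative super-operator $D$. The iterated Baker--Campbell--Hausdorff commutators then become nested expressions involving time derivatives of $A(t)$ via identities such as $[D,A(t)] = A'(t)$, and these must remain bounded on the evolution interval in the operator norm acting on $u_0$. Under a smoothness hypothesis on $A(t)$ --- precisely the hypothesis flagged in the remark citing Wiebe \emph{et al.} --- these nested commutators are bounded and the autonomous error expansion transfers to the non-autonomous setting without change. Once this transfer is accepted, together with the uniform Strang bound (\ref{equation_1}) borrowed from the Jahnke--Lubich analysis, the rest of the proof is a routine application of Lady Windermere's fan.
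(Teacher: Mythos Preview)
Your proof is correct and follows the same route as the paper: the telescoping (Lady Windermere's fan) identity, combined with a stability bound on $S$ and the consistency estimate arising from the Vandermonde construction (the paper defers the latter to Theorem~\ref{conv_2}). The only notable difference is that the paper simply asserts $\|S\|\le\exp(c\omega h)$ as a direct consequence of (\ref{equation_1}), whereas you obtain it by bootstrapping from the local error --- a more careful justification, since a naive triangle inequality on $S=\sum_i c_i\T_2^{k_i}(h/k_i)$ would pick up the factor $\sum_i|c_i|>1$ and fail under iteration.
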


\begin{proof}

We apply the telescopic identity and obtain:
\ba
&& \left( S^m - \exp(m h (A(t) + D)) \right) u_0 = \\
&& \sum_{\nu = 0}^{m-1} S^{m - \nu - 1} ( S - \exp(h (A(t) + D)) ) \exp(\nu h (A(t) + D)) u_0.
\ea
where $ S = \sum_{i=1}^n c_i \T_2^{k_i}(\frac{h}{k_i})$

We apply the error estimate in (\ref{equation_1}) to obtain the stability requirement:

\be
|| \sum_{i=1}^n c_i \T_2^{k_i}(\frac{h}{k_i}) || \le \exp(c \omega h).
\ee

Assuming the consistency of
\be
|| \sum_{i=1}^n c_i \T_2^{k_i}(\frac{h}{k_i}) - \exp(h (A + D)) || \le C O(h^{2n+1})
\ee
we have the following error bound:
\be
|| \left( S^m - \exp(m h (A(t) + D)) \right)u_0 || \le C O(h^{2n+1}) ,   m h \le t_{end} ,
\ee

The consistency of the error bound is derived in the following theorem.

\end{proof}

\begin{theorem}
\label{conv_2}

For the numerical solution of (\ref{funeq}),
we have the following consistency:
\ba
&& || \sum_{i=1}^n c_i \T_2^{k_i}(\frac{h}{k_i}) - \exp(h (A + D)) || \le C O(h^{2n+1}) .   
\ea 
\end{theorem}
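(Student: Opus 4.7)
The plan is to deduce the consistency bound from the asymptotic error structure of the symmetric kernel $\T_2$ together with the design of the coefficients $c_i$ via the Vandermonde system (\ref{vand}). First I would establish that, because $\T_2(h)$ is a left--right symmetric product in $A(t)+D$ satisfying $\T_2(-h)\T_2(h)=1$, its formal Baker--Campbell--Hausdorff logarithm contains only odd powers of $h$, which is precisely (\ref{secerr}) applied with $A\mapsto A(t)$ and $B\mapsto D$. Concretely, $\T_2(h)=\exp\!\bigl(h(A(t)+D)+h^3 E_3+h^5 E_5+\cdots\bigr)$ with $E_{2j+1}$ a fixed nested commutator in $A(t)$ and $D$. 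Raising this to the $k$-th power with step $h/k$ simply rescales each odd term by $k^{-2j}$, giving (\ref{thk}):
\begin{equation}
\T_2^{k}\!\left(\frac{h}{k}\right)=\exp\!\Bigl(h(A(t)+D)+k^{-2}h^{3}E_3+k^{-4}h^{5}E_5+\cdots\Bigr).
\end{equation}

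Next, I would expand the right-hand side against $\exp(h(A(t)+D))$. Writing the perturbation $R_k(h)=k^{-2}h^3E_3+k^{-4}h^5E_5+\cdots$ and using the Duhamel/Dyson formula, one obtains
\begin{equation}
\T_2^{k}(h/k)=\exp(h(A(t)+D))+\sum_{j=1}^{n}k^{-2j}h^{2j+1}\,G_{2j+1}+\sum_{j=1}^{n}k^{-2j}h^{2j+2}\,G_{2j+2}+r_{n}(k,h),
\end{equation}
where each $G_{2j+1}$ and $G_{2j+2}$ is a \emph{fixed} operator independent of $k$ (for instance $G_4=\tfrac12[(A(t)+D)E_3+E_3(A(t)+D)]$) and the remainder $r_n(k,h)$ satisfies $\|r_n(k,h)\|\le C_n h^{2n+3}$ uniformly for $k\in\{k_1,\dots,k_n\}$, using the bound (\ref{equation_1}) to control the exponentials. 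The crucial point is that the coefficient of $h^{2j+1}$ and $h^{2j+2}$ both carry the same factor $k^{-2j}$, so \emph{two} consecutive orders are annihilated by a \emph{single} linear constraint on $\{c_i\}$.

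Forming the multi-product $S(h)=\sum_{i=1}^{n}c_i\,\T_2^{k_i}(h/k_i)$ and imposing $\sum_i c_i=1$ together with $\sum_i c_i k_i^{-2j}=0$ for $j=1,\dots,n-1$, which is exactly the Vandermonde system (\ref{vand}) solved in closed form by (\ref{coef}), simultaneously kills all the contributions $G_{2j+1}$ and $G_{2j+2}$ for $j=1,\dots,n-1$. What survives is
\begin{equation}
S(h)-\exp(h(A(t)+D))=\Bigl(\sum_{i=1}^{n}c_i k_i^{-2n}\Bigr)\bigl(h^{2n+1}G_{2n+1}+h^{2n+2}G_{2n+2}\bigr)+\sum_{i=1}^{n}c_i\,r_n(k_i,h),
\end{equation}
which is $O(h^{2n+1})$ in operator norm. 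Summing the $|c_i|$ (which are fixed numbers depending only on $n$) absorbs all the constants into the single $C$.

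The main obstacle, and the step requiring the most care, is bounding the tail $r_n(k,h)$ rigorously at the operator level in the presence of the super-operator $D$: since $D$ is unbounded, the BCH-type expansion of $\T_2^k(h/k)$ is only formal, and the argument must be carried out either on a suitable dense domain of sufficiently smooth functions of $t$ (so that all commutators of $A(t)$ with $D$ act in a controlled way, cf.\ the smoothness proviso of the Wiebe et al.\ remark) or through the integral Dyson series, where each term is explicit and the remainder admits an integral remainder estimate controlled by (\ref{equation_1}). Once this analytic step is granted, the algebraic cancellation driven by (\ref{vand}) and (\ref{coef}) is automatic and yields the stated consistency bound.
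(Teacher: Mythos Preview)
Your proposal is correct and follows essentially the same route as the paper: invoke the odd-power BCH structure (\ref{secerr})--(\ref{thk}) of the symmetric kernel, observe that both the $h^{2j+1}$ and $h^{2j+2}$ error terms carry the common factor $k^{-2j}$, and then let the Vandermonde constraints (\ref{vand})--(\ref{coef}) annihilate them. Your write-up is in fact more careful than the paper's own short proof, which records only the odd $h^{2i+1}E_{2i+1}$ terms and does not explicitly isolate the remainder or address the unboundedness of $D$; the analytic caution you flag about bounding $r_n(k,h)$ in the presence of the super-operator is a genuine point the paper leaves implicit.
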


\begin{proof}

Based on the derivation of the coefficients via the Vandermonde 
equation the product is bounded and we have: 
\ba
&&  \sum_{k = 1}^n c_k \T_2^k(\frac{h}{k}) =   \sum_{k = 1}^n c_k \left( \exp((A + D) h) - (k^{-2} h^3 E_3 + k^{-4} h^5 E_5 + \ldots ) \right) , \\
&& =   \sum_{k = 1}^n c_k \left( \exp((A + D) h) - \sum_{i=1}^n k^{- 2i} h^{2i + 1} E_{2 i +1} \right) , \nonumber \\
&& = \left(  \exp((A + D) h) - \sum_{k = 1}^n c_k \sum_{i=1}^n k^{- 2i} h^{2i + 1} E_{2 i +1} \right) , \nonumber \\
&& = O(h^{2n+1}),
\ea 
where the coefficients are given in (\ref{coef}).

\end{proof}

\begin{lemma}
\label{lemma_1}
We assume $|| A(t) ||$ to be bounded in the interval $t \in (0, t_{end})$.
Then $\T_2$ is non-singular for sufficient small $h$.
\end{lemma}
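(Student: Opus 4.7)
The plan is to exploit the explicit simplification of the kernel given in equation~(\ref{sec}), namely $\T_2(h) = \e^{h A(t+h/2)}$. Because Suzuki's super-operator $D$ generates time translations and has been absorbed into the argument of $A$, we are no longer confronted with an unbounded operator on the right-hand side: what remains is a single matrix/operator exponential whose generator $h A(t+h/2)$ is bounded by hypothesis.

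First I would fix $M := \sup_{s\in(0,t_{end})} \|A(s)\| < \infty$, which is finite by the assumption of the lemma. Then, using the standard estimate $\|\e^{X}-I\| \le \e^{\|X\|}-1$, I would bound
\begin{equation}
\|\T_2(h) - I\| = \|\e^{h A(t+h/2)} - I\| \le \e^{h M} - 1 .
\end{equation}
Choose $h_0 > 0$ small enough that $\e^{h_0 M} - 1 < 1$, for instance $h_0 < (\ln 2)/M$. Then for every $0 < h \le h_0$ we have $\|\T_2(h)-I\| < 1$, and the Neumann series
\begin{equation}
\T_2(h)^{-1} = \sum_{k=0}^{\infty} \bigl(I - \T_2(h)\bigr)^k
\end{equation}
converges in the operator norm, yielding a bounded inverse. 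Equivalently, one may exhibit the inverse directly as $\T_2(h)^{-1} = \e^{-h A(t+h/2)}$, which is a well-defined bounded operator for any $h$ when $A(t)$ is bounded; invertibility then follows without any smallness assumption, though the quantitative Neumann estimate gives a uniform bound on $\|\T_2(h)^{-1}\|$.

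The main delicate point is the legitimacy of the reduction $\T_2(h) = \e^{h A(t+h/2)}$: this rests on the commutation rule for $D$ with time-independent pieces and on the fact that $D$ acts only as a time-shift via~(\ref{fg}). Once that identity is accepted (as it was in~(\ref{sec})), the remainder is a routine Neumann-series argument. Thus I do not expect any substantive obstacle, as long as one is careful to state that the norm bound on $A$ is uniform in $t$ so that the choice of $h_0$ can be made independently of $t \in (0,t_{end})$.
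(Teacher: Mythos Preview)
Your proposal is correct and follows the same conceptual route as the paper: bound $\|A(t)\|$ uniformly on $(0,t_{end})$ and conclude that $\T_2$ is invertible for small $h$. The paper's own proof is essentially a one-line assertion of this implication without supplying the mechanism, so your Neumann-series argument (and the even simpler observation that $\T_2(h)^{-1}=\e^{-hA(t+h/2)}$) merely makes explicit what the paper leaves implicit.
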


\begin{proof}
We use our assumption $||A(t)||$ is to be bounded in the interval $0 < t < t_{end}$.

So we can find $||A(t)|| < C$ for $0<t<t_{end}$, where $C \in \R^+$ a bound of operator $A(t)$ independent of $t$.

Therefore $\T_2$ is always non-singular for sufficiently
small h. 

\end{proof}

\begin{remark}
Based on these results the kernel $\T_2$ is also uniform convergent.

The same argument can be used by applying to MPE formula,
while all operators are convergent, the sum of all is also bounded 
and convergent, see \cite{dv50} and \cite{egg50}.
\end{remark}

\begin{remark}
For higher kernels, e.g. 4th order, there exists also error bounds
so that uniform convergent results can be derived, see e.g. \cite{geiser08}.
Such kernels can also be used to the MPE method to acchieve
higher order accuracy with uniform convergent series. But as we
noted earlier, these cannot be applied to time-irreversible problems.
\end{remark}



\subsection{Analysis of the odd-order kernel ${\cal U}_n$}

\begin{lemma}
\label{lem_2}

We will assume that for sufficiently small $h$, 
the Burstein and Mirin's decomposition is bounded as follow:
\ba
\label{equ_kern_1}
 || {\cal U}_n(h) || = || \e^{\frac{h}{2n-1}A(t)}(\e^{\frac{2h}{2n-1}D}\e^{\frac{2h}{2n-1}A(t)})^{n-1}\e^{\frac{h}{2n-1}D} || \le  \exp(c \omega h), \; \forall t \ge 0 ,
\ea
with $c$ only dependent on the coefficients of the method.

\end{lemma}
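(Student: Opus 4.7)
The plan is to reduce the bound on $\mathcal{U}_n(h)$ to a product of bounds on its individual exponential factors, in the same spirit as the analogous estimate (\ref{equation_1}) for $\mathcal{T}_2$. First I would enumerate the factors appearing in the definition (\ref{ubasis}): there are $n$ factors of the form $e^{\alpha_j h A(t)}$ and $n$ factors of the form $e^{\beta_j h D}$, where one $A$-factor and one $D$-factor carry the weight $1/(2n-1)$, while the remaining $(n-1)$ of each type carry the weight $2/(2n-1)$. A quick arithmetic check shows that the $A$-coefficients sum to $[1 + 2(n-1)]/(2n-1) = 1$, and likewise for the $D$-coefficients; this bookkeeping is what ultimately yields an exponent linear in $h$ with the expected rate.

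Next I would apply submultiplicativity of the operator norm:
\be
\|\mathcal{U}_n(h)\| \le \prod_j \|e^{\alpha_j h A(t)}\|\,\prod_j \|e^{\beta_j h D}\|.
\ee
For the $A$-factors, the assumption that $A(t)$ is uniformly bounded on $[0,t_{end}]$ by some constant $\omega$ (the same hypothesis used in Lemma \ref{lemma_1}) gives $\|e^{\alpha_j h A(t)}\| \le e^{\alpha_j h \omega}$ for sufficiently small $h$. For the $D$-factors, the defining property (\ref{fg}) shows that $e^{\tau D}$ acts as a pure forward time-translation on time-dependent functions; it is therefore an isometry in the relevant function spaces, so $\|e^{\beta_j h D}\| = 1$. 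Collecting these bounds and inserting the sum-of-coefficients computation from the first paragraph yields $\|\mathcal{U}_n(h)\| \le e^{h\omega}$, so the conclusion holds with $c = 1$.

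The main delicate point, and the only place where care is required beyond routine algebra, is the justification of $\|e^{\tau D}\| \le 1$ in the precise functional-analytic setting, which in turn forces the shifted evaluation times $t + \beta_j h$ produced by repeated applications of (\ref{fg}) to remain inside the interval on which $A(\cdot)$ is bounded. The clause ``for sufficiently small $h$'' in the statement absorbs this issue, in exact parallel with the treatment of $\mathcal{T}_2$ in (\ref{equation_1}): all such shifted times lie in $[t, t+h] \subset [0, t_{end}]$ under a standard restriction on $h$. Once this is in place, everything else is the submultiplicativity step plus the telescoping observation that the $A$-weights add up to one.
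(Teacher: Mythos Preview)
Your argument is correct, but it proceeds along a different line from the paper. The paper does not bound the individual exponential factors directly; instead it observes an algebraic regrouping
\[
{\cal U}_n(h)
=\Bigl(\e^{\frac{h}{2n-1}A(t)}\,\e^{\frac{2h}{2n-1}D}\,\e^{\frac{h}{2n-1}A(t)}\Bigr)^{n-1}
\e^{\frac{h}{2n-1}A(t)}\,\e^{\frac{h}{2n-1}D},
\]
which exhibits ${\cal U}_n(h)$ as the product of $(n-1)$ Strang-type factors (with the roles of $A$ and $D$ interchanged relative to ${\cal T}_2$) followed by a single Lie--Trotter (A--B) factor. The stability bound then follows by invoking the known estimates of Jahnke and Lubich~\cite{jan00} for each of these building blocks. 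Your route is more elementary and self-contained: it needs only submultiplicativity, the uniform bound $\|A(t)\|\le\omega$, and the observation that the Suzuki shift operators $\e^{\beta_j hD}$ contribute nothing to the norm once their action has been evaluated via (\ref{fg}), leaving a product of $\e^{\alpha_j hA(t_j)}$ with $\sum_j\alpha_j=1$. The paper's structural decomposition, on the other hand, ties the bound directly to the existing splitting literature and in principle extends to the unbounded-operator setting treated there, at the cost of importing that machinery rather than arguing from first principles.
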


The proof follows by rewriting equation (\ref{equ_kern_1})
as a product of the Strang and the A-B splitting schemes:

\begin{proof}

Equation (\ref{equ_kern_1}) can be rewritten as:
\ba
\label{ass1_1}
&& \e^{\frac{h}{2n-1}A(t)}(\e^{\frac{2h}{2n-1}D}\e^{\frac{2h}{2n-1}A(t)})^{n-1}\e^{\frac{h}{2n-1}D} \\
& = & \left( \e^{\frac{h}{2n-1}A(t)} \e^{\frac{2h}{2n-1}D} \e^{\frac{h}{2n-1}A(t)} \right)^{n-1} 
\e^{\frac{h}{2n-1}A(t)} \e^{\frac{h}{2n-1}D}, \; \forall t \ge 0 , \nonumber 
\ea
The error bound and underlying convergence analysis for both the Strang and the A-B
splitting have been previously studied by
Janke and Lubich \cite{jan00}.
\end{proof}

We assume the following derivation of the higher order MPE:

\begin{assumption}
\label{ass_2}

We assume the following higher order decomposition,
\be
{\rm e}^{\ep(A + D)}
=\sum_{i=1}^n \tilde{c}_i \; {\cal U}_i(h) + O(h^{2n}).
\la{mulexp_2}
\ee
where $\tilde{c}_i$ are derived
based on the Vandermonde equation (\ref{vand}) with $\{k_i\}$ being
a set of odd whole numbers.

\end{assumption}

We can then derive the following convergence results for the multi-product expansion.

\begin{theorem}
\label{conv_3}

For the numerical solution of (\ref{funeq}), we consider the Assumption \ref{ass_2} of order $2n - 1$ and we apply Lemma \ref{lem_2},
then we have a convergence result given as:

\be
|| \left( S^m - \exp(m h (A(t) + D)) \right)u_0 || \le C O(h^{2n}) ,   m h \le t_{end} ,
\ee
with $n = 1, 2, 3, \ldots$, and where $ S = \sum_{i=1}^n \tilde{c}_i \U_i(h)$ and $C$ is to be chosen uniformly on bounded time intervals and
independent of $m$ and $h$ for sufficient small $h$.

\end{theorem}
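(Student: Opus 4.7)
The plan is to follow the same Lady Windermere's fan telescoping used in the proof of Theorem~\ref{conv_1}, now with the odd-order basis $\U_i(h)$ in place of $\T_2^{k_i}(h/k_i)$ and with the local-error exponent lowered from $2n+1$ to $2n$. The two ingredients needed are a uniform stability bound on the iterated amplification operator $S^j$ and a consistency bound $||S - \exp(h(A+D))|| = O(h^{2n})$. The starting identity is
\be
(S^m - \exp(mh(A+D)))u_0 = \sum_{\nu=0}^{m-1} S^{m-\nu-1}\bigl(S - \exp(h(A+D))\bigr)\exp(\nu h(A+D))\, u_0,
\ee
which reduces the problem to bounding the local error and the surrounding amplifications.

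For stability, Lemma~\ref{lem_2} gives $||\U_i(h)|| \le \exp(c\omega h)$ for each odd-order building block, but a direct triangle inequality on $S = \sum_{i=1}^n \tilde c_i\, \U_i(h)$ only yields $||S|| \le \bigl(\sum_i |\tilde c_i|\bigr)\exp(c\omega h)$, which is not immediately of exponential stability form since the weights $\tilde c_i$ alternate in sign and grow in magnitude with $n$. The cleaner route is to derive stability from consistency: Assumption~\ref{ass_2} already gives $S = \exp(h(A+D)) + O(h^{2n})$, so for sufficiently small $h$
\be
||S|| \le ||\exp(h(A+D))|| + C h^{2n} \le \exp(c'\omega h),
\ee
and therefore $||S^{j}|| \le \exp(c'\omega\, j h) \le \exp(c'\omega\, t_{end})$ uniformly for $j h \le t_{end}$. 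The exponential evolution $\exp(\nu h(A+D))$ is likewise bounded on $[0, t_{end}]$.

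Inserting the consistency bound $||S - \exp(h(A+D))|| \le C h^{2n}$ from Assumption~\ref{ass_2} into each term of the telescopic sum, and absorbing the surrounding stability factors into a single constant, gives
\be
||(S^m - \exp(mh(A+D)))u_0|| \le m\, C\, h^{2n}\, \exp(c'\omega\, t_{end})\, ||u_0||,
\ee
which, using $mh \le t_{end}$, is the claimed $O(h^{2n})$ bound (matching the local-error order exactly as in Theorem~\ref{conv_1}). The main obstacle is the stability step: since the odd Vandermonde weights $\tilde c_i$ associated with $\{k_i\} = \{1,3,5,\ldots,2n-1\}$ are oscillatory and grow with $n$, one cannot stability-bound $S$ term by term. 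The resolution is to exploit the normalization $\sum_i \tilde c_i = 1$ encoded in the top row of (\ref{vand}), so that the oscillatory sum collapses to $\exp(h(A+D))$ to leading order, with the $O(h^{2n})$ remainder absorbed into the exponential stability rate for small enough $h$; once this is in hand, the remainder of the argument is identical to that of Theorem~\ref{conv_1}.
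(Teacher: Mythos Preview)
Your argument is correct and follows essentially the same skeleton as the paper: the paper's proof of Theorem~\ref{conv_3} simply says ``the same proof ideas can be followed after the proof of Theorem~\ref{conv_1}'', i.e.\ telescopic identity $+$ stability $+$ consistency, with consistency deferred to the subsequent theorem. Your only deviation is in the stability step: where the paper (by analogy with Theorem~\ref{conv_1}) would invoke Lemma~\ref{lem_2} to assert $\|S\|\le\exp(c\omega h)$ directly, you correctly observe that the sign-alternating weights $\tilde c_i$ block a naive triangle-inequality argument and instead extract stability from the consistency estimate in Assumption~\ref{ass_2}. That is a genuine tightening of the paper's reasoning rather than a different route, and the rest of your argument matches the paper's intended proof.
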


\begin{proof}

The same proof ideas can be followed after the proof of Theorem \ref{conv_1}.

The consistency of the error bound is derived in the following theorem.

\end{proof}

\begin{theorem}
For the numerical solution of (\ref{funeq}),
we have the following consistency:
\ba
&& || \sum_{i=1}^n \tilde{c}_i \; \U_i(h) - \exp(h (A + D)) || \le C O(h^{2n}) .   
\ea 
\end{theorem}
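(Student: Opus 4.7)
The proof will proceed in direct analogy with the consistency argument for the even-order kernel (Theorem \ref{conv_2}), with the symmetric-product error structure (\ref{thk}) replaced by the odd-order effective expansion (\ref{eff}). My plan is threefold: (i) expand each basis product $\cu_i(h)$ as $\exp(h(A+D))$ plus a Taylor series in $h$ whose coefficients are polynomials in $x_i^{-2}$, where $x_i = 2i-1$; (ii) form the linear combination $\sum_i \tilde c_i \cu_i(h)$ and interchange the order of summation; (iii) invoke the odd-indexed Vandermonde system satisfied by $\tilde c_i$ to annihilate every error term through order $h^{2n-1}$.

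Concretely, starting from (\ref{eff}) I would Taylor-expand the exponent and then the outer exponential to obtain
\ba
\cu_i(h) - \exp(h(A+D)) &=& \sum_{j=1}^\infty x_i^{-2j}\,\bigl(h^{2j}\,G_{2j} + h^{2j+1}\,G_{2j+1}\bigr), \nn
\ea
where the operators $G_k$ are finite linear combinations of nested commutators of $A$ and $D$ (including time-derivatives of $A$ produced by the action of $D$); these are bounded in norm on $(0,t_{end})$ under the hypothesis of Lemma \ref{lemma_1}. Multiplying by $\tilde c_i$ and summing,
\ba
\sum_{i=1}^n \tilde c_i\, \cu_i(h) - \Bigl(\sum_{i=1}^n \tilde c_i\Bigr) \exp(h(A+D)) &=& \sum_{j=1}^\infty \Bigl(\sum_{i=1}^n \tilde c_i\, x_i^{-2j}\Bigr)\bigl(h^{2j}\,G_{2j} + h^{2j+1}\,G_{2j+1}\bigr). \nn
\ea
By the defining Vandermonde equation (\ref{vand}) applied with $k_i = x_i = 2i-1$, one has $\sum_i \tilde c_i = 1$ and $\sum_i \tilde c_i x_i^{-2j} = 0$ for $j = 1,\dots,n-1$. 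This kills every contribution through $h^{2n-1}$, leaving a remainder whose leading term is $h^{2n}\bigl(\sum_i \tilde c_i x_i^{-2n}\bigr) G_{2n}$, which together with a geometric-in-$h$ tail bound on the surviving series yields the claimed $C\,O(h^{2n})$ estimate.

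The main obstacle will be the caveat immediately following equation (\ref{act}): the true exponent of $\cu_n(h)$ contains an additional commutator $F'_4$ with the mixed coefficient $(x_n^{-2} - x_n^{-4})$, and in principle further such mixed-power corrections at higher orders. The resolution is that every such mixed coefficient is itself a finite linear combination of pure powers $x_i^{-2k}$, so each summand can be handled separately by the Vandermonde identities used above. Carrying out this bookkeeping term by term preserves the cancellation pattern but enlarges the constants in the $O(h^{2n})$ estimate, which is harmless for a consistency statement. With this accounted for, the remainder of the argument is a verbatim transcription of the even-order proof, and consistency at order $h^{2n}$ follows.
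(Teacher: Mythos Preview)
Your proposal is correct and follows exactly the approach the paper takes: the paper's own proof consists of the single sentence ``The same proof ideas can be followed after the proof of Theorem \ref{conv_2},'' and you have simply written out what that transcription looks like for the odd-order basis (\ref{eff})--(\ref{act}). Your explicit handling of the mixed-power coefficients $(x_i^{-2}-x_i^{-4})$ and their decomposition into pure powers of $x_i^{-2}$ is more careful than anything the paper spells out, but it is precisely the bookkeeping implicit in the paper's claim that the Vandermonde cancellation goes through unchanged.
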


\begin{proof}

The same proof ideas can be followed after the proof of Theorem \ref{conv_2}.

\end{proof}

\begin{remark}
The same proof idea can be used to generalise the 
higher order schemes.
\end{remark}

\section{Analytical and numerical verifications}
\label{exp}

In this section, we seek to verify and assess the convergence of both 
the even and odd order MPE algorithms. For a single product splitting, there are
no known splittings that are exact in the limit of large number of
operators. Even in the case of the Zassenhaus formula, it is 
non-trivial to compute the higher order products, not to mention
evaluating them. For this purpose, we turn to the much studied
Magnus expansion, where the exact limit can be computed in simple
cases.

The Magnus expansion\cite{blan08} solves (\ref{funeq}) in the form
\begin{eqnarray}
\label{magform}
&& Y(t) = \exp(\Omega(t))Y(0),\qquad \Omega(t) = \sum_{n=1}^{\infty} \Omega_n(t) 
\end{eqnarray}
where the first few terms are
\begin{eqnarray}
\label{equ3}
&& \Omega_1(t) = \int_0^tdt_1 A_1 \nn\\
&& \Omega_2(t) = \frac12 \int_0^t dt_1\int_0^{t_1}dt_2 [A_1, A_2]   \nn\\
&& \Omega_3(t) = \frac16 \int_0^t dt_1\int_0^{t_1}dt_2
\int_0^{t_2}dt_3 ( [A_1, [A_2,A_3]+[[A_1,A_2],A_3] )\nn\\
&&\qquad\qquad\cdots\cdots
\la{magterms}
\end{eqnarray}
with $A_n\equiv A(t_n)$. In practice, it is more useful to define the $n$th order
Magnus operator 
\be
\Omega^{[n]}(t)=\Omega(t)+O(t^{n+1})
\ee
such that
\be
Y(t)=\exp\bigl[\Omega^{[n]}(t)\bigr]Y(0)+O(t^{n+1}).
\la{magnexp}
\ee 
Thus the second-order Magnus operator is
\ba
 \Omega^{[2]}(t)&=&\int_0^tdt_1 A(t_1)\nn\\
 &=&tA\left(\frac12 t\right)+O(t^3) 
\ea
and a fourth-order Magnus operator\cite{blan08} is
\be
 \Omega^{[4]}(t)=\frac12 t(A_1+A_2)-c_3 t^2[A_1,A_2]
\ee
where $A_1=A(c_1t)$, $A_2=A(c_2 t)$ and
\be
c_1 = \frac12 - \frac{\sqrt{3}}6,\qquad c_2 = \frac12 + \frac{\sqrt{3}}6, 
\qquad c_3=\frac{\sqrt{3}}{12}.
\ee
For the ubiquitous case of 
\be
A(t)=T+V(t),
\ee
one has
\ba
\e^{\Omega^{[2]}(t)}&=&\e^{t[T+V(t/2)]}\nn\\
&=& \e^{\frac12 tT}\e^{tV(t/2)}\e^{\frac12 t T}+O(t^3)
\ea
and
\be
\e^{\Omega^{[4]}(t)}=\e^{c_3 t (V_2-V_1)}
\e^{t (T+\frac12(V_1+V_2))}
\e^{-c_3t (V_2-V_1)}+O(t^5)
\la{amag4}              
\ee
where 
\be
V_1 = V(c_1 t),\qquad V_2 = V(c_2 t).
\ee
The Magnus expansion (\ref{magnexp}) is automatically sturcture-preserving because it is 
a {\it single exponential operator} approximation. However, since one must further split 
$\Omega^{[n]}$ into computable parts, the expansion is as complex, if not more so, than 
a single product splitting. In the following, the comparison is not strictly equitable,
because the MPE is {\it not} structure-preserving. Neveetheless it is useful to know
that, perhaps for that reason, MPE can be uniformly convergent.      

\subsection{The non-singular matrix case}

To assess the convergence of the Multi-product expansion
with that of  the Magnus series, consider the well known
example\cite{moan08} of
\begin{eqnarray}
&& A(t) = \left(
\begin{array}{c c}
2 & t \\
0 & -1
\end{array}
\right).
\end{eqnarray}
The exact solution to (\ref{funeq}) with $Y(0)=I$ is
\ba
&& Y(t) = \left(
\begin{array}{c c}
\e^{2 t} & \quad f(t) \\
0 &  \e^{- t}
\end{array}
\right),
\la{exysol}
\ea
with
\ba
f(t)&=&\frac{1}{9}\e^{-t}(\e^{3t}-1-3t)\la{exff}\\
&=&	\frac{t^2}2
+\frac{t^4}8
+\frac{t^5}{60}
+\frac{t^6}{80}
+\frac{t^7}{420}
+\frac{31 t^8}{40320}
+\frac{t^9}{6720}
+\frac{13t^{10}}{403200}
+\frac{13t^{11}}{178200}
\la{exffb}
\ea
For the Magnus expansion, one has the series
\be
\Omega(t) = \left(
\begin{array}{c c}
2t & \quad g(t) \\
0 & -t
\end{array}
\right),
\la{omga}
\ee
with, up to the 10th order,
\ba
g(t)&=&\frac12 t^2-\frac14 t^3+\frac3{80}t^5-\frac9{1120}t^7+\frac{81}{44800}t^9+\cdots\la{gser}\\
&\rightarrow& \frac{t(\e^{3t}-1-3t)}{3(\e^{3t}-1)}.
\ea 
Exponentiating (\ref{omga}) yields
(\ref{exysol}) with
\ba
f(t)&=&t\e^{-t}(\e^{3t}-1)\left(\frac16 -\frac1{12}t+\frac1{80}t^3-\frac3{1120} t^5
+\frac{27}{44800}t^7+\cdots\right)\la{fser}\\
&\rightarrow& t\e^{-t}(\e^{3t}-1)\left(\frac1{9t}-\frac1{3(\e^{3t}-1)}\right)
\ea
Whereas the exact solution (\ref{exff}) is an entire function of $t$, the
Magnus series (\ref{gser}) and (\ref{fser}) only converge for $|t|<\frac23\pi$
due to the pole at $t=\frac23\pi i$. The Magnus series (\ref{fser}) is plot in
Fig.1 as blue lines. The pole at $|t|=\frac23\pi\approx 2$ is clearly visible.

\begin{figure}[ht]
\begin{center} 
\includegraphics[width=10.0cm,angle=-0]{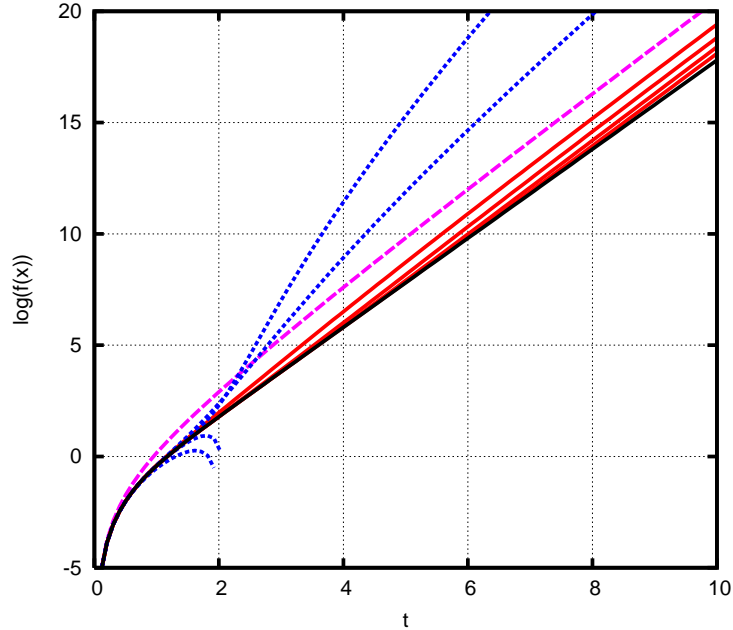} 
\end{center}
\caption{\label{fig_1} The black line is the exact result (\ref{exff}).
The dotted blue lines are the Magnus fourth to tenth order results (\ref{fser}),
which diverge from the exact result beyond $t>2$. The solid red lines are
the multi-product expansions. The dashed-purple line is their common second order result.
}
\end{figure}

For the even order multi-product expansion,
from (\ref{sec}), by setting $\dt=t$ and $t=0$, we have
\be
{\cal T}_2(t)=\exp\left[ t 
\left(
\begin{array}{c c}
2 & \quad\frac12 t \\
0 & -1
\end{array}
\right)\right]
=\left(
\begin{array}{c c}
\e^{2 t} & \quad f_2(t) \\
0 &  \e^{- t}
\end{array}
\right)	
\ee
and we compute ${\cal T}_2^2(t)$ according to (\ref{tn}) as
\be
{\cal T}^2_2(t/2)=\exp\left[ \frac{t}{2} 
\left(
\begin{array}{c c}
2 & \quad\frac34 t \\
0 & -1
\end{array}
\right)\right]
\left(
\begin{array}{c c}
\e^{t} & \quad f_2(t/2) \\
0 &  \e^{- \frac{t}{2}}
\end{array}
\right)	
\ee

with
\be
f_2(t) =\frac16 t\e^{-t}(\e^{3 t}-1). 
\ee
This is identical to first term of the Magnus series (\ref{fser}) and is
an entire function of $t$. Since higher order MPE uses only powers of
${\cal T}_2$, higher order MPE approximations are also entire functions of
$t$. Thus up to the 10th order, one finds
\be
f_4(t) =t\e^{-t}\left(\frac{\e^{3 t}-5}{18}+\frac{2\e^{3t/2}}9\right) 
\ee
\be
f_6(t) = t\e^{-t}\left(\frac{11\e^{3 t}-109}{360}+\frac9{40}(\e^{2 t}+\e^t)
-\frac8{45}\e^{3t/2}\right) 
\ee
\be
f_8(t) = t\e^{-t}\left(\frac{151\e^{3 t}-2369}{7560}+\frac{256}{945}(\e^{9t/4}+\e^{3t/4})
-\frac{81}{280}(\e^{2 t}+\e^t)
+\frac{104}{315}\e^{3t/2}\right) 
\ee
\ba
f_{10}(t) &=& t\e^{-t}\biggl(
  \frac{15619\e^{3 t}-347261}{1088640}
+\frac{78125}{217728}(\e^{12t/5}+\e^{9t/5}+\e^{6t/5}+\e^{3t/5})\nn\\
&&\qquad\qquad -\frac{4096}{8505}(\e^{9t/4}+\e^{3t/4})
+\frac{729}{4480}(\e^{2 t}+\e^t)
-\frac{4192}{8505}\e^{3t/2}\biggr). 
\ea
These even order approximations are plotted as red lines in Fig.1. The
convergence is uniform for all $t$.

When expanded, the above yields
\ba
\label{expan_1}
f_{2}(t)&=&	\frac{t^2}2+\frac{t^3}4+\cdots\nn\\
f_{4}(t)&=&	\frac{t^2}2+\frac{t^4}8+\frac{5t^5}{192}+\cdots\nn\\
f_{6}(t)&=&	\frac{t^2}2+\frac{t^4}8+\frac{t^5}{60}+\frac{t^6}{80}+\frac{t^7}{384}+\cdots\nn\\
f_{8}(t)&=&	\frac{t^2}2+\frac{t^4}8+\frac{t^5}{60}+\frac{t^6}{80}
+\frac{t^7}{420}+\frac{31t^8}{40320}+\frac{1307t^9}{8601600}
+\cdots\nn\\
f_{10}(t)&=&	\frac{t^2}2+\frac{t^4}8+\frac{t^5}{60}+\frac{t^6}{80}
+\frac{t^7}{420}+\frac{31t^8}{40320}
+\frac{t^9}{6720}
+\frac{13t^{10}}{403200}
+\frac{13099t^{11}}{232243200}\qquad
\la{evenorder}
\ea
and agree with the exact solution to the claimed order. Similarly, the m-step
extrapolated algorithms $\T_{2,m}$, $\T_{4,m}$, {\it etc.}, are also correct up to
the claimed order.

For odd orders, by again setting $\dt=t$ and $t=0$, 
the basis defined in (\ref{tubas}) now reads
\ba
\cu_1(t)&=&\exp\left[ t 
\left(
\begin{array}{c c}
2 & \quad 0 \\
0 & -1
\end{array}
\right)\right]
=\left(
\begin{array}{c c}
\e^{2 t} & \quad 0 \\
0 &  \e^{- t}
\end{array}
\right)\nn\\	
\cu_2(t)&=&
\exp\left[ \frac23 t 
\left(
\begin{array}{c c}
2 & \quad \frac23 t \\
0 & -1
\end{array}
\right)\right]
\exp\left[ \frac13 t 
\left(
\begin{array}{c c}
2 & \quad 0 \\
0 & -1
\end{array}
\right)\right]\nn\\
&=&\left(
\begin{array}{c c}
\e^{2 t} & \quad\frac29 t(\e^t-e^{-t}) \\
0 &  \e^{- t}
\end{array}
\right)\nn\\
\cdots	
\ea
and the MPE (\ref{three}) to (\ref{nine}) give
\ba
\label{expan_2}
f_{3}(t)&=&	\frac{t^2}2+\frac{t^4}{12}+\cdots\nn\\
f_{5}(t)&=&	\frac{t^2}2+\frac{t^4}8+\frac{t^5}{60}+\frac{11 t^6}{1000}+\cdots\nn\\
f_{7}(t)&=&	\frac{t^2}2+\frac{t^4}8+\frac{t^5}{60}+\frac{t^6}{80}+\frac{t^7}{420}
+\frac{18299t^8}{24696000}\cdots\nn\\
f_{9}(t)&=&	\frac{t^2}2+\frac{t^4}8+\frac{t^5}{60}+\frac{t^6}{80}
+\frac{t^7}{420}+\frac{31t^8}{40320}
+\frac{t^9}{6720}+\frac{1577t^{10}}{49392000}
+\cdots
\la{oddorder}
\ea
Results (\ref{evenorder}) and (\ref{oddorder}) constitute an analytical verification of
the even and odd order MPE (\ref{four})-(\ref{ten}) and (\ref{three})-(\ref{nine}). 

\subsection{The singular matrix case}

Consider the radial Schr\"odinger equation\\
\begin{eqnarray}
  \frac{\partial^2 u}{\partial r^2} = f(r, E) u(r)
  \la{har}
\end{eqnarray}
where
\begin{eqnarray}
  f(r, E) = 2 V(r) - 2 E + \frac{l(l + 1)}{r^2} \; .
\end{eqnarray}
By relabeling $r\rightarrow t$ and $u(r)\rightarrow q(t)$, (\ref{har}) can be viewed as
harmonic oscillator with a time dependent spring constant
\be
k(t,E)=-f(t,E)
\ee
and  Hamiltonian
\begin{eqnarray}
H=\frac12 p^2+\frac12 k(t,E)q^2.
\la{tham}
\end{eqnarray}
Thus any eigenfunction of (\ref{har}) is an exact time-dependent solution of
(\ref{tham}). For example, the ground state of the hydrogen atom with
$l=0$, $E=-1/2$ and
\be
V(r)=-\frac1{r}
\ee
yields the exact solution
\ba
q(t)&=&t\exp(-t)\nn\\
&=&t-t^2+\frac{t^3}{2}-\frac{t^4}{6}+\frac{t^5}{24}-\frac{t^6}{120}
+\frac{t^7}{720} -\frac{t^8}{5040}
\cdots,\nn\\
&=&t-t^2+\frac{t^3}{2}-0.1667t^4+0.0417t^5-0.0083t^6
\cdots
\la{exsolu}
\ea
with initial values $q(0)=0$ and $p(0)=1$.
Denoting
\be
Y(t) = \left(
\begin{array}{c}
q(t) \\
p(t)
\end{array}
\right), 
\ee
the time-dependent harmonic oscillator (\ref{tham}) now corresponds to
\begin{eqnarray}
&& A(t) = \left(
\begin{array}{c c}
0 & 1 \\
f(t) & 0
\end{array}
\right)=
\left(
\begin{array}{c c}
0 & 1 \\
0 & 0
\end{array}
\right)
+
\left(
\begin{array}{c c}
0 & 0 \\
f(t) & 0
\end{array}
\right) \equiv T+V(t),
\la{coula}
\end{eqnarray}
with a singular matrix element
\be
f(t)=(1-\frac2{t}).
\ee
The second-order midpoint algorithm is
\ba
\T_2(h,t)&=&{\rm e}^{{1\over 2}h T }
{\rm e}^{h V(t+h/2)}
{\rm e}^{{1\over 2}h T }\nn\\
&=&\left(
\begin{array}{c c}
1+\frac12 h^2f(t+\frac12 h)& \quad h +\frac14 h^3f(t+\frac12 h) \\
h f(t+\frac12 h) & 1+\frac12 h^2f(t+\frac12 h),
\end{array}
\la{sectt}
\right)              
\ea
and for $q(0)=0$ and $p(0)=1$, (setting $t=0$ and $h=t$),
correctly gives the second order result,
\be
q_2(t)=t +\frac14 t^3f(\frac12 t)= t-t^2+\frac14 t^3.
\ee
The even order multi-product expansions (\ref{four})-(\ref{ten}) then yield
\ba
q_4(t)&=&t-t^2+0.3889 t^3-0.1111 t^4+0.0104 t^5\nn\\
q_6(t)&=&t-t^2+0.4689 t^3-0.1378 t^4+0.0283 t^5-0.0043 t^6 \nn\\
q_8(t)&=&t-t^2+0.4873 t^3-0.1542 t^4+0.0356 t^5-0.0062 t^6\cdots\nn\\
q_{10}(t)&=&t-t^2+0.4936t^3-0.1603 t^4+0.0385 t^5-0.0073 t^6\cdots
\la{evorder}
\ea
where we have converted fractions to decimal forms for easier comparison with 
the exact solution (\ref{exsolu}). 
One sees that MPE no longer matches the Taylor expansion 
beyond second-order. This is due to the singular nature of the Coulomb potential,
which makes the problem a challenge to solve. (If one naively makes a Taylor expansion
about $t=0$ starting with $q(0)=0$ and $p(0)=1$, then every term beyond the initial
values would either be divergent or undefine.) 

Since $A(t)$ is now singular
at $t=0$, the previous proof of uniform convergence no longer holds. Nevertheless, from
the exact solution (\ref{exsolu}), one sees that force (or acceleration)
\be
\lim_{t\rightarrow 0} f(t)q(t)=-2
\la{fin}
\ee
remains finite. It seems that this is sufficient for uniform convergence
as the coefficients of $t^n$  
do approach their correct value with increasing order. 

For odd order MPE, 
while each term $\e^{(\dt/x) V(t)}$ of the basis product in 
(\ref{tvubas}) is singular at $t=0$, but because of (\ref{fin}),
\be
 \lim_{t\rightarrow 0}\e^{(\dt/x) V(t)}
 \left(
\begin{array}{c}
q(t) \\
p(t)
\end{array}
\right)=
\left( 
\begin{array}{c}
0 \\
1-2\dt/x
\end{array}
\right). 
\la{sing}
\ee
Interpreting the action of the first operator this way, the basis products of (\ref{tvubas})
then yield, according to the MPE (\ref{three})-(\ref{nine}),
\ba
q_3(t)&=&t-t^2+\frac{t^3}{2}-0.1111 t^4\nn\\
q_5(t)&=&t-t^2+\frac{t^3}{2}-0.1458 t^4+0.0333 t^5-0.0033 t^6 \nn\\
q_7(t)&=&t-t^2+\frac{t^3}{2}-0.1628 t^4+0.0382 t^5-0.0067 t^6\cdots\nn\\
q_9(t)&=&t-t^2+\frac{t^3}{2}-0.1655 t^4+0.0406 t^5-0.0078 t^6\cdots
\la{odorder}
\ea
Now $q_3(t)$ is correct to third order, but higher order algorithms are
still down-graded and only approaches the exact solution asymptotically but 
uniformly.

\begin{figure}[ht]
\begin{center} 
\includegraphics[width=10.0cm,angle=-0]{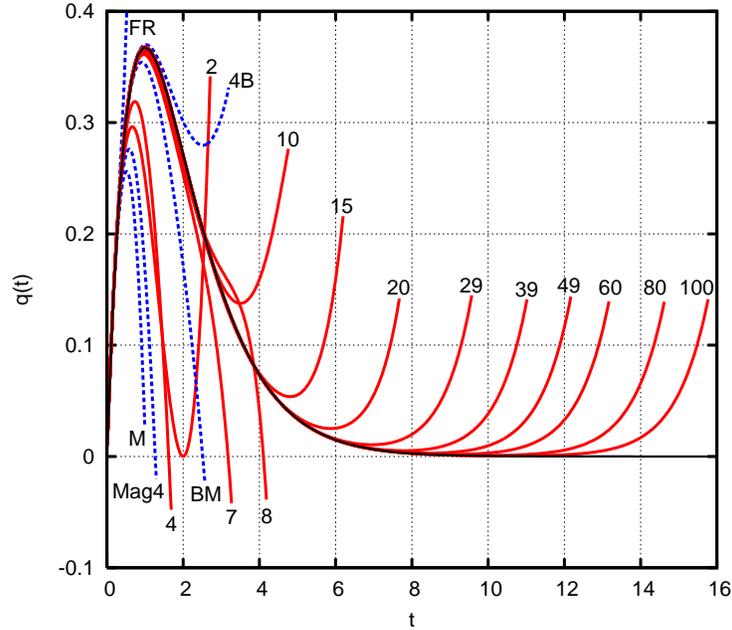} 
\end{center}
\caption{\label{part_1} The uniform convergence of the multi-product expansion 
in solving for the hydrogen ground state wave function.
The black line is the exact ground state wave function. The numbers denote
the order of the multi-product expansion. The dotted blue lines denote results of
various fourth-order algorithms. }
\la{hydro}
\end{figure}

To see this uniform 
convergence, we show in Fig.\ref{hydro}, how higher order 
MPE, both even and odd, up to the 100th order, compares with the exact solution.
The calculation is done numerically rather than by evaluating the analytical
expressions such as (\ref{evorder}) or (\ref{odorder}). The order of the MPE algorithms are
indicated by numbers. For odd order algorithms, we do not even bother to
incorporate (\ref{sing}), but just avoid the singularity by starting the
algorithm at $t=10^{-6}$. 
Also shown are some
well know fourth-order symplectic algorithm FR (Forest-Ruth\cite{fr90}, 3 force-evaluations),
M (McLachlan\cite{mcl95}, 4 force-evaluations), BM (Blanes-Moan\cite{bm02}, 6 force-evaluations), 
Mag4 (Magnus integrator, 4 force-evaluations) and
4B\cite{chin06} (a {\it forward} symplectic algorithm with $\approx$ 2 evaluations).
These symplectic integrators steadily improves from FR, to M, to Mag4, to BM to
4B. Forward algorithm 4B is noteworthy in that it is the only fourth-order algorithm
that can go around the wave function maximum at $t=1$, yielding
\be
q_{4B}(t)=t-t^2+\frac{t^3}{2}-0.1635t^4+0.0397t^5-0.0070 t^6
+0.0009t^7
\cdots,
\ee
with the correct third-order coefficient and comparable higher order coefficients as
the exact solution (\ref{exsolu}). By contrast,
the FR algorithm, which is well know to have rather large errors, has the
expansion,   
\be
q_{FR}(t)=t-t^2-0.1942{t^3}+3.528t^4-2.415t^5+0.5742 t^6
-0.0437t^7
\cdots,
\ee
with terms of the wrong signs beyond $t^2$.	The failure of these fourth-order algorithms
to converge correctly due to the singular nature of the Coulomb potential is consistent
with the findings of Wiebe {\it et al.}\cite{wiebe08}. However, their finding does not
explain why the second-order algorithm can converge correctly and only higher order
algorithms fail. A deeper understanding of Suzuki's method is necessary to resolve this
issue. 

\begin{figure}[ht]
\begin{center} 
\includegraphics[width=10.0cm,angle=-0]{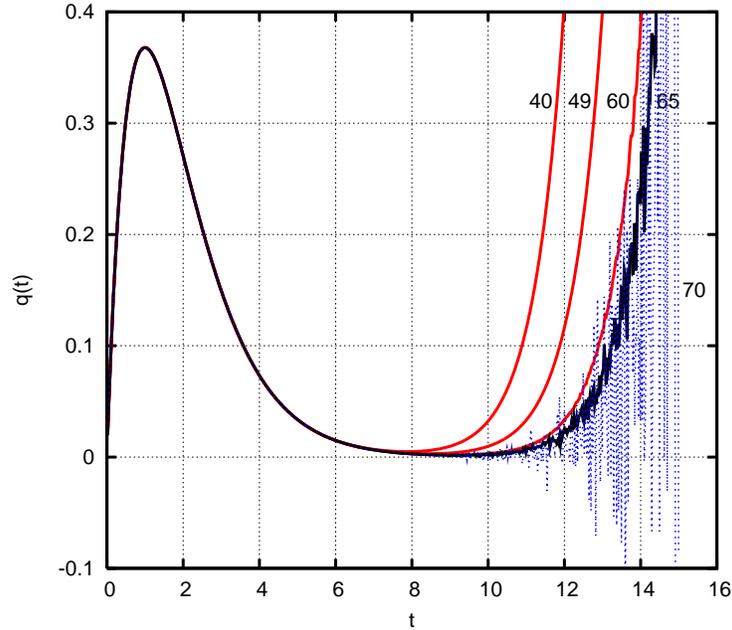} 
\end{center}
\caption{\label{part_1} The multi-product expansion results for 
the hydrogen ground state wave function using only double-precision
arithematics. The effect of limited precision begins to show at 
order 60. The results for order 65 and 70 are indicated as black and dotted blue lines
respectively.}
\la{noise}
\end{figure}

In Fig.\ref{hydro}, results for orders 60, 80 and 100 are computed using 
quadruple precision. If one uses only double-precision, the effect
of round-off errors on limited precision is as shown in Fig.\ref{noise}. 
For this calculation, the round-off errors are not very noticeable
even at orders as high as 40 or 49, which is rather surprising.
The round-off errors are noticeable only at orders greater than $\approx$ 60.  

For non-singular potentials such as the radial harmonic oscillator with
\be
f(t)=t^2-3,
\ee
and exact ground state solution
\be
q(t)=t\e^{-t^2/2}=t-\frac{t^3}{2}+\frac{t^5}{8}-\frac{t^7}{48}+\frac{t^9}{384}
-\frac{t^{11}}{3840}+\cdots, 
\ee
the multi-product expansion has no problem in reproducing
the exact solution to the claimed order:
\ba
q_6(t)&=&t-\frac{t^3}{2}+\frac{t^5}{8}-\frac{13 t^7}{576}+\cdots\nn\\
q_7(t)&=&t-\frac{t^3}{2}+\frac{t^5}{8}-\frac{t^7}{48}+\frac{1082t^9}{385875}+\cdots\nn\\
q_8(t)&=&t-\frac{t^3}{2}+\frac{t^5}{8}-\frac{t^7}{48}+\frac{20803t^9}{7741440}+\cdots\nn\\
q_9(t)&=&t-\frac{t^3}{2}+\frac{t^5}{8}-\frac{t^7}{48}+\frac{t^9}{384}-\frac{341t^{11}}{1224720}+\cdots\nn\\
q_{10}(t)&=&t-\frac{t^3}{2}+\frac{t^5}{8}-\frac{t^7}{48}+\frac{t^9}{384}-\frac{50977t^{11}}{193536000}+\cdots\ .
\la{hoorder}
\ea
In this case, the odd order algorithms have the advantage of
being correct one order higher.

\section{Concluding summary and discussions }
\label{conc}

In this work, we have shown that the most general framework
for deriving Nystr\"om type algorithms for solving autonomous and non-autonomous
equations is multi-product splitting. By expanding on a suitable
basis of operators, the resulting multi-product expansion 
not only can reproduce conventional extrapolated integrators of even-order
but can also yield new odd-order algorithms.   
By use of Suzuki's rule of incorporating the time-ordered exponential, any
multi-product splitting algorithm can be adopted for solving
explicitly time-dependent problems.	The analytically know expansion coefficients
$c_i$ allow great flexibility in designing adaptive algorithms.  
Unlike structure-preserving methods, such as the Magnus expansion,
which has a finite radius of convergence, our multi-product expansion converges 
uniformly. Moreover, MPE requires far less operators at higher orders than either 
the Magnus expansion or conventional single-product splittings. The general 
order-condition for multi-product splitting is not known and should be developed.   
In the future we will focus on applying MPE methods for solving
nonlinear differential equations and time-irreversible or semi-group problems.

\bibliographystyle{plain}

\end{document}